\def\le{\leqslant}
\def\ge{\geqslant}
\def\leq{\leqslant}
\newtheorem{theorem}{Theorem}
\newtheorem{lemma}[theorem]{Lemma}
\theoremstyle{remark}
\newtheorem{rem}[theorem]{Remark}
\newtheorem{definition}[theorem]{Definition}
\newtheorem{example}[theorem]{Example}
\numberwithin{equation}{section}
\numberwithin{theorem}{section}
\numberwithin{table}{section}
\numberwithin{figure}{section}
\def\wt {\mathrm{wt}}
\def\Tr {\mathrm{Tr}}
\def\Fq{\F_q}
\def\Fqr{\F_{q^r}} 
\def\ovFq{\overline{\F_q}}
\def \Grs{\cG_r(s)}
\def\\{\cr}
\def\({\left(}
\def\){\right)}
\def\[{\left[}
\def\]{\right]}
\def\<{\langle}
\def\>{\rangle}
\def\fl#1{\left\lfloor#1\right\rfloor}
\def\rf#1{\left\lceil#1\right\rceil}
\def\cF{{\mathcal F}}
\def\cG{{\mathcal G}}
\def\cH{{\mathcal H}}
\def\cJ{{\mathcal J}}
\def\cL{{\mathcal L}}
\def\cS{{\mathcal S}}
\def\cV{{\mathcal V}}
\def\cW{{\mathcal W}}
\def\e{{\mathbf{\,e}}}
\def\ep{{\mathbf{\,e}}_p}
\def\F{\mathbb{F}}
\def\Z{\mathbb{Z}}
\def\mand{\qquad \text{and} \qquad}
\def\eqref#1{(\ref{#1})}
\begin{document}

\title[Character sums over sparse elements] {Character sums
 over sparse elements of finite fields} 

\author[L. M\'erai]{L\'aszl\'o M\'erai}
\address{L.M.: Johann Radon Institute for Computational and Applied Mathematics, Austrian Academy of Sciences, 4040 Linz, Austria}
\email{laszlo.merai@oeaw.ac.at}


\author[I. E. Shparlinski] {Igor E. Shparlinski}
\address{I.E.S.: School of Mathematics and Statistics, University of New South Wales, Sydney NSW 2052, Australia}
\email{igor.shparlinski@unsw.edu.au}


\author[A. Winterhof]{Arne Winterhof}
\address{A.W.: Johann Radon Institute for Computational and Applied Mathematics, Austrian Academy of Sciences, 4040 Linz, Austria}
\email{arne.winterhof@oeaw.ac.at}

\date{\today}

\begin{abstract}
We estimate mixed character sums of polynomial values over elements of a finite field $\F_{q^r}$ with sparse representations in a fixed ordered basis
over the subfield~$\F_q$. First we use a combination of the inclusion-exclusion principle with bounds on character sums over linear subspaces to get nontrivial bounds for large $q$. Then we focus on 
the particular case $q=2$, which is more intricate.
The bounds depend on certain natural restrictions. We also provide families of  examples for which the conditions of our bounds are fulfilled. In particular, we completely classify all monomials as argument of the additive character for which our bound
is applicable. 
Moreover, we also show that it is applicable for a large family of rational functions, which includes all reciprocal monomials. 

\end{abstract}

\keywords{Sparse elements, character sums, finite fields}
\subjclass[2010]{11T23}

\maketitle


\section{Introduction}
\subsection{Motivation and set-up} 
Recently, motivated by a solution to the Gelfond problem by Mauduit and Rivat~\cite{MaRi2}, 
 there has been an explosion in the investigation of arithmetic problems involving 
 integers with various digits restrictions in a given integer base $g$. For example, 
 
 \begin{itemize}
\item Bourgain~\cite{Bou1, Bou2} has investigated primes with prescribed digits
on a positive proportion of positions in their digital expansion, while Maynard~\cite{May1,May2} has investigated primes with missing digits, see also~\cite{Bug,DES,DMR,Nas, Pratt} for a series of other results 
about primes and other arithmetically interesting integers such as smooth and squarefree numbers with restrictions on their digits;

\item Mauduit and Rivat~\cite{MaRi1} and Maynard~\cite{May2} have also studied values of integral polynomials with digital restrictions;

\item Bounds of exponential sums with digitally restricted integers can be found in~\cite{Big,Emi3,OstShp,MaRa,PfTh,ShTh,ThTi}; in some of these, also  the Goldbach and Waring problems with such numbers has been considered;

\item {\'E}minyan~\cite{Emi2} studied average values of arithmetic functions for such numbers. 
\end{itemize}

We also note that special integers with restricted digits appear in the context of cryptography~\cite{GrShp}.

In the case of function fields, several significant results have also been 
obtained, see~\cite{DMS,DMW,DS13,DES,Gab,MW,Ost,Swa1,Swa2} and references 
therein. However, in general, this direction falls behind its counterpart over $\Z$. 
Here we make a step towards removing this disparity and, in particular, we obtain finite 
field analogues  of the bound of~\cite{ShTh} on {\it Weyl sums} over integers with 
``sparse'' binary representations, that is, with a small  sum of binary digits.
More precisely, for an integer 
$$n=\sum_{j=0}^\infty n_j2^j\quad\mbox{with }n_j\in \{0,1\},$$
let
$$\sigma(n)=\sum_{j=0}^\infty n_j$$
be the sum of binary digits of $n$. For any integers $r$ and $s$ with $0\le s\le r$ let
$$\cH_r(s)=\left\{0\le n<2^r :~\sigma(n)=s\right\}$$
be the set of integers with $r$ binary digits and sum of digits equal to $s$. 
For example, 
 in~\cite{ShTh}, 
 one can  find new nontrivial bounds on Weyl sums over $\cH_r(s)$ with  a real polynomial $f(X)$, of the type 
$$\left|\sum_{n\in \cH_r(s)}\exp(2\pi i f(n)) \right| \le C \binom{r}{s}^{-\eta}$$
with $\eta >0$ and $C$ which depends on the ``sparsity'' $s/r$, $\deg f$ and the Diophantine 
properties of the leading coefficient of $f(X)$.

Motivated by these results, we now consider similar questions in the setting of finite fields. Namely, 
we assume that we are 
given a fixed ordered basis $\(\vartheta_1, \ldots, \vartheta_r\)$ of the finite field
$$
\Fqr =\left \{u_1\vartheta_1+ \ldots +u_r \vartheta_r:~u_1, \ldots, u_r \in \Fq\right\}
$$
of $q^r$ elements over the finite field $\Fq$ of $q$ elements. 
Furthermore, for 
$$
\nu= u_1\vartheta_1+ \ldots +u_r \vartheta_r\in \Fqr
$$ 
we denote by $\wt(\nu)$ the {\it Hamming weight\/} of  $\nu$, that is, the number of nonzero elements 
among the components $u_1, \ldots , u_r \in \Fq$. 
Then, for a positive integer $s \le r$ we define the set of $s$-sparse elements
$$
\Grs = \{\nu \in \Fqr:~\wt(\nu) =s\}.
$$
Our goal is to estimate mixed character sums 
$$
S_{s,r}(\chi,\psi;f_1,f_2) = \sum_{\nu \in  \Grs}\chi\(f_1(\nu)\)\psi\(f_2(\nu)\), 
$$
with rational functions $f_1(X), f_2(X) \in \Fqr(X)$, of degrees $d_1$ and $d_2$, respectively, where~$\chi$ and~$\psi$ are a fixed multiplicative and 
 additive character of $\F_{q^r}$, respectively (with the natural conventions that the poles of $f_1(X)f_2(X)$ are excluded from summation,
 and that a rational function $f(X)=a(X)/b(X)$ is represented by relatively prime polynomials $a(X)$ and $b(X)$). 
 
 Certainly our bounds can be used  to study, for example, 
 the distribution of primitive elements in the values of polynomials on elements from 
 $\Grs$ or their pseudorandom properties. Since these underlying methods are very standard we do not give these applications here.


\subsection{Notation and conventions}

Throughout the paper,  we fix the size $q$ of the ground field, and thus its characteristic  $p$ 
while the parameters $r$ and $s$ are allowed to grow. 

We also fix an additive character $\psi$ and a multiplicative character~$\chi$ of~$\Fqr$
which are not both principal.

As usual, we use $\ovFq$ to denote the algebraic closure of $\F_q$. 

For a finite set $\cS$ we use $\# \cS$ to denote its cardinality.

We denote by $\log_2 x$ the binary logarithm of $x>0$.

We adopt the Vinogradov symbol $\ll$,  that is, for any quantities $A$ and $B$ we
have the following equivalent definitions:
$$A\ll B~\Longleftrightarrow~A=O(B)~\Longleftrightarrow~|A|\le c B$$
for some  constant $c>0$, which throughout the paper is allowed to depend on the 
degrees $d_1,d_2$ and the ground field size~$q$ 
(but not on the main parameters $r$ and $s$).

 We also adopt the $o$-notation
$$A=o(B)~\Longleftrightarrow ~|A|\le \varepsilon B$$
for any fixed $\varepsilon>0$ and sufficiently large (depending on $d_1$, $d_2$, $q$ 
and~$\varepsilon$) values of the parameters $r$ and $s$.

We also write 
$$\ep(z)=\exp(2\pi iz/p).
$$
Finally, we also recall our convention that the poles of functions in the arguments of multiplicative and 
additive characters are always excluded from summation.

\section{Our results}

\subsection{Bounds of exponential sums over sparse elements} 


It is useful to recall that the (absolute) {\em trace} $\Tr(\xi)$ of $\xi\in \F_{q^r}$, is
$$\Tr(\xi)=\sum_{j=0}^{rm-1}\xi^{p^j},$$ 
where $q=p^m$ with a prime $p$, and 
the  {\em additive characters} $\psi$ of $\F_{q^r}$ are given by
$$\psi (\xi)=\ep(\Tr(\zeta \xi)),\qquad \xi\in \F_{q^r},$$
for some fixed $\zeta \in \F_{q^r}$ (with obviously $\zeta \ne 0$ for nonprincipal characters).
The multiplicative characters of $\F_{q^r}$ are 
given by
$$\chi(\gamma^j)=\e_{q^r-1}(jk),\quad j,k=0,1,\ldots,q-2,$$
for a primitive element $\gamma$ of $\F_{q^r}$ (with $k\not=0$ for nonprincipal characters). We also use the convention $\chi(0)=0$.
We also recall that the {\em order} of a multiplicative character $\chi$ is the smallest integer 
exponent $e\ge 1$ such that $\chi^e = \chi_0$ is the  principal character.
See, for example,~\cite[Chapters~3 and~11]{IwKow} or~\cite[Chapter~5]{LiNi} 
for more details on characters and character sums over finite fields.

Trivially we have
$$
\left|S_{s,r}(\chi,\psi;f_1,f_2)\right|\le \#\cG_r(s)=\binom{r}{s}(q-1)^s.
$$

First we record a rather simple result which we derive in Section~\ref{sec:largeq}, 
combining the inclusion-exclusion principle and bounds on character sums over linear subspaces.  

\begin{theorem}\label{thm:largeq}
Let $\chi$ and $\psi$ be a multiplicative and additive character, respectively, and 
let $f_1(X) , f_2(X)\in \F_{q^r}(X)$ be rational functions over~$\F_{q^r}$ of degrees $d_1$ and $d_2$, respectively. 
Assume that at least one of the following conditions holds
\begin{itemize}
\item $\chi$ is nonprincipal of order $e$ and $f_1(X) \ne g(X)^e$ for all rational functions $g(X)\in\ovFq(X)$, 
\item $\psi$ is nonprincipal and $f_2(X)  \ne \alpha(g(X)^p-g(X))+\beta X$ for all rational functions $g(X)\in\ovFq(X)$
and $\alpha, \beta \in \ovFq$. 
\end{itemize}
Then we have
$$\left|S_{s,r}(\chi,\psi;f_1,f_2)\right|\le \(d_1+\max\{d_2,2\}\) 2^{s+1}\binom{r}{s}q^{r/2}.$$
\end{theorem}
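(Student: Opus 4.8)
**The plan is to reduce the sum over sparse elements to sums over linear subspaces via inclusion–exclusion, then apply a Weil-type bound on each subspace.**

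The plan is to use the inclusion–exclusion principle on the support of $\nu$. For a subset $I \subseteq \{1,\dots,r\}$ write $V_I = \{\sum_{j\in I} u_j\vartheta_j : u_j\in\F_q\}$ for the $\F_q$-linear subspace of $\F_{q^r}$ spanned by the basis vectors indexed by $I$. An element $\nu$ has weight exactly $s$ iff it lies in some $V_I$ with $|I|=s$ but in none of its coordinate hyperplanes; standard inclusion–exclusion then lets me write $\mathbf 1_{\nu\in\cG_r(s)}$ as an alternating sum over $J\supseteq$ (something), so that
\begin{equation*}
S_{s,r}(\chi,\psi;f_1,f_2) = \sum_{\substack{I\subseteq\{1,\dots,r\}\\ |I|=s}}\ \sum_{J\subseteq I} (-1)^{|I|-|J|} \sum_{\nu\in V_J}\chi(f_1(\nu))\psi(f_2(\nu)).
\end{equation*}
(The precise bookkeeping — counting elements of weight exactly $s$ inside $V_I$ by Möbius inversion over subsets — is routine, and the number of $(I,J)$ pairs is $\binom{r}{s}2^s$.)

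The second step is to bound the inner sum $T_J := \sum_{\nu\in V_J}\chi(f_1(\nu))\psi(f_2(\nu))$ for each fixed $J$. Each $V_J$ is an $\F_q$-subspace of $\F_{q^r}$ of dimension $|J|\le s$, hence a coset-free object of size $q^{|J|}\le q^s$. Restricting the rational functions $f_1,f_2$ to $V_J$ — which is an affine-linear image of $\F_q^{|J|}$, or better, after choosing a suitable model, contained in a line or small-dimensional affine piece — one obtains a mixed character sum to which the Weil bound (as in \cite[Chapter~11]{IwKow} or the standard results for rational function arguments) applies, giving $|T_J| \ll (d_1+\max\{d_2,2\})\, q^{|J|/2}\le (d_1+\max\{d_2,2\})\, q^{s/2}$, provided the nondegeneracy hypotheses are met. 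This is where the two bulleted conditions enter: $\chi\circ f_1$ restricted to the subspace is not trivial precisely because $f_1$ is not an $e$-th power (the $e$-th power condition is stable under restriction since it is phrased over $\ovF_q$), and similarly $\psi\circ f_2$ is nontrivial because $f_2$ is not of Artin–Schreier-plus-linear form $\alpha(g^p-g)+\beta X$ — the shape that trivializes additive character sums. One must check these degeneracy conditions are inherited by the restriction to each $V_J$; this is the technical heart.

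Combining, $|S_{s,r}| \le \binom{r}{s} 2^s \cdot (d_1+\max\{d_2,2\})\, q^{s/2}$, wait — I need $q^{r/2}$, not $q^{s/2}$, so in fact the Weil bound on $V_J$ should be taken as a sum over $q^{|J|}\le q^r$ elements of $\F_{q^r}$ giving square-root cancellation $q^{r/2}$ on the full field scale; more precisely the relevant completion is to view $V_J$ inside $\F_{q^r}$ and bound by $q^{r/2}$ times the degree, which is the crude but valid estimate. Then
\begin{equation*}
|S_{s,r}(\chi,\psi;f_1,f_2)| \le \binom{r}{s}\cdot 2^s \cdot 2 \cdot (d_1+\max\{d_2,2\})\, q^{r/2} = (d_1+\max\{d_2,2\})\,2^{s+1}\binom{r}{s}q^{r/2},
\end{equation*}
which is the claimed bound. \textbf{The main obstacle} I anticipate is verifying that the nondegeneracy conditions on $f_1,f_2$ — stated globally over $\ovF_q(X)$ — survive restriction to the subspaces $V_J$ so that the Weil bound is genuinely applicable on each piece; handling the case where some $V_J$ happens to lie inside the pole locus, or where the restriction accidentally becomes an $e$-th power or Artin–Schreier form, is the delicate point, and is presumably why the hypotheses are phrased over the algebraic closure.
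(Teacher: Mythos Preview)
Your high-level plan --- inclusion--exclusion over supports, reducing to at most $\binom{r}{s}2^s$ sums over $\F_q$-subspaces $V_J$, then a Weil-type bound on each --- matches the paper exactly. The gap is in the middle step, bounding $T_J = \sum_{\nu\in V_J}\chi(f_1(\nu))\psi(f_2(\nu))$.

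You propose to ``restrict $f_1,f_2$ to $V_J$'' and apply Weil, worrying whether the nondegeneracy conditions survive restriction. This is not the right mechanism, and it is why you are confused about $q^{|J|/2}$ versus $q^{r/2}$. The subspace $V_J$ is a $|J|$-dimensional $\F_q$-vector space, not an affine line over some extension, so there is no one-variable Weil bound to apply after restriction; and even if you parametrize $V_J$ by $\F_q^{|J|}$, the functions $f_1,f_2$ become multivariate rational functions whose nondegeneracy is not controlled by the stated hypotheses.

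What the paper actually does (Lemma~3.2 there, due essentially to Winterhof) is \emph{complete} the sum: detect membership in $V_J$ by averaging additive characters over the annihilator of $V_J$, obtaining
\[
\bigl|T_J\bigr| \le \max_{\beta\in\F_{q^r}} \Bigl|\sum_{\nu\in\F_{q^r}} \chi(f_1(\nu))\,\psi(f_2(\nu)-\beta\nu)\Bigr|.
\]
Now the sum is over the full field $\F_{q^r}$, so the ordinary one-variable Weil bound gives $2(d_1+\max\{d_2,2\})q^{r/2}$ --- this is where the $q^{r/2}$ and the extra factor of $2$ come from. Crucially, the twisted argument $f_2(X)-\beta X$ satisfies the Weil nondegeneracy condition for \emph{every} $\beta$ precisely because of the hypothesis $f_2(X)\ne \alpha(g(X)^p-g(X))+\beta X$: the otherwise mysterious ``$+\beta X$'' in the statement is there exactly to absorb this linear twist. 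So there is no ``restriction to subspace'' and no issue of inherited nondegeneracy; the hypotheses are tailored to make the completed full-field sum nondegenerate for all $\beta$.
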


\begin{rem}
We note that the conditions on $f_1(X)$ and $f_2(X)$ in  Theorem~\ref{thm:largeq} are natural 
and correspond to the conditions of Lemma~\ref{lem:weilsub} below 
(which in turn stems from a similar necessary and sufficient condition of 
 Lemma~\ref{lem:weil}).  Note that Theorem~\ref{thm:largeq} is applicable if 
  $\chi$ is nonprincipal of order $e\nmid d_1$ which is always true if $\gcd(d_1,q^r-1)=1$.
It is also always applicable  if $f_1(X)$ has at least one simple root in~$\ovFq$.
 Furthermore,  if $\psi$ is nonprincipal, Theorem~\ref{thm:largeq}  always 
 applies when~$d_2\ge 2$  and $\gcd(d_2,p)=1$.
\end{rem}

\begin{rem} If $\psi= \psi_0$ is the trivial character, that is in the case of pure sums of multiplicative characters 
$$
S_{s,r}(\chi;f) = \sum_{\nu \in  \Grs}\chi\(f(\nu)\),
$$
using a slightly more precise version of  Lemma~\ref{lem:weilsub} for  pure sums of multiplicative 
characters over linear subspaces, see for example,~\cite[p.~469, Claim~(iii)]{Win},
one can replace the bound 
of  Theorem~\ref{thm:largeq}  with 
$$|S_{s,r}(\chi;f)|\le 2^s t\binom{r}{s}q^{r/2}, $$
where $t$ is the number of  distinct zeros and poles of $f(X) \in \Fq(X)$ in $\ovFq$ (provided, as before, that 
$f(X) \ne g(X)^e$ for all rational functions $g(X)\in\ovFq(X)$,  where $e$ is the order of $\chi$).  
\end{rem}

 Assuming that $d_1$, $d_2$ and $q$ are fixed, we see that the bounds of Theorem~\ref{thm:largeq} 
 are nontrivial if $r,s \to \infty$ and  satisfy
 $$
 \frac{s}{r} > \frac{\log_2 q }{2\log_2((q-1)/2)},\quad q>3.
 $$
 It is useful to observe that 
 $$
 \frac{\log_2 q}{2\log_2((q-1)/2)} < 1, \quad q\ge 7,
 $$
 and thus Theorem~\ref{thm:largeq}  produces nontrivial results, 
 starting from $q \ge 7$.

Next we  focus on our main goal when $q=2$ and the case that the degree $r$ of the extension field $\F_{q^r}$ grows. 

We now define the following class $\cF_d(q)$ of rational functions
for which we obtain nontrivial estimates of the sums $S_{s,r}(\chi,\psi;f_1,f_2)$ in case of $\psi\not=\psi_0$.

\begin{definition}  
\label{def'Fdq}
 Let $\cF_d(q)$ be the set of rational functions $f(X)\in\F_{q^r}(X)$ of 
degree $d$ such that 
for any  $\omega\in \F_{q^r}^*$
the polynomial 
$$
f_{\omega}(X)=f(X+\omega)-f(X)
$$
is not of the form
$$
f_{\omega}(X)=\alpha\(g(X)^p-g(X)\)+\beta X 
$$
for some rational function $g(X)\in \ovFq(X)$  and  $\alpha,\beta\in \F_{q^r}$.
\end{definition}    

We repeatedly use the observation that for $f(X) \in \cF_d(q)$ we also have a more general condition that for any $\omega_1\not=\omega_2$
$$f(X+\omega_1)-f(X+\omega_2) \ne \alpha\(g(X)^p-g(X)\)+\beta X,
$$
where $\alpha$, $\beta$ and $g(X)$ are as in Definition~\ref{def'Fdq}.

Next we concentrate on the case $q=2$. 
We remark that our approach works for any  fixed $q$, however 
our ideas can be explained in a much more transparent form for rational functions over $\F_2$,
hence we only consider this case. We also note that classifying elements by their 
Hamming weight is the most natural for $q=2$.

Let $H(\gamma)$ be the {\it binary entropy\/} function defined by
\begin{equation}
\label{eq:bin entrop}
    H(\gamma)=-\gamma\log_2 \gamma -(
    1-\gamma)\log_2(1-\gamma),  \qquad 0<\gamma<1,
\end{equation}
and 
$H(0)=H(1)=0$.
It is also convenient to define 
$$
H^*(\gamma) =\begin{cases}  H(\gamma),& \text{if}\  0 \le \gamma\le \frac{1}{2};\\
H\(\frac{1}{2}\)=1,  & \text{if}\    \gamma>\frac{1}{2}.
\end{cases} 
$$
We now define the following three functions depending 
on three   parameters $\kappa,\lambda, \rho \in (0,1)$ with $\lambda \le \kappa \rho \le \frac{\kappa}{2}$:
\begin{align*}
\mathsf f(\rho; \kappa, \lambda) 
 &=\kappa H\(\lambda/\kappa\) +  (1-\kappa) H^*\((\rho -\lambda)/(1-\kappa)\),\\
\mathsf g(\rho; \kappa, \lambda)  &=  \frac{1}{4} + \frac{1}{2}  \(H(\rho)+ (1-\kappa)H^*\((\rho-\lambda)/(1-\kappa)\)\),\\
\mathsf h(\rho; \kappa, \lambda)  &= \frac{1}{2} \( H(\rho)+ \kappa\).
\end{align*}
Finally, we define 
$$
\eta(\rho) = \min_{0< \kappa <1}\,  \min_{0 < \lambda \le \kappa \rho}
\max\{\mathsf f(\rho; \kappa, \lambda) , \mathsf g(\rho; \kappa, \lambda) , \mathsf h(\rho; \kappa, \lambda) \}.
$$

\begin{theorem}\label{thm:general}
Let $\chi$ and $\psi$ be a multiplicative and additive character, respectively, and 
let $f_1(X) , f_2(X)\in \F_{2^r}(X)$ be rational functions over~$\F_{2^r}$ of degrees $d_1$ and $d_2$, respectively. 
Assume that at least one of the following conditions holds
\begin{itemize}
\item $\chi$ is nonprincipal and $f_1(X)$ has least one simple root in $\overline{\F_2}$,  
\item $\psi$ is nonprincipal and $f_2(X) \in\cF_{d_2}(2)$. 
\end{itemize}
For $s$ with $1\le s < r$ we put
$$\rho=\frac{\min\{s,r-s\}}{r}.$$
Then 
$$
\left|S_{s,r}(\chi,\psi;f_1,f_2)\right|\le  2^{\eta(\rho) r + o(r)}.
$$ 
\end{theorem}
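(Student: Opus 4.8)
The plan is to reduce the character sum over $\cG_r(s)$ to a combination of completed sums over linear subspaces, and then to combine these with a combinatorial weighting that distributes the support of a sparse vector between a ``head'' block of length $\kappa r$ and a ``tail'' block of length $(1-\kappa)r$. First I would fix a threshold parameter $\kappa\in(0,1)$ and split the ordered basis $(\vartheta_1,\dots,\vartheta_r)$ into the first $k=\lfloor\kappa r\rfloor$ coordinates and the remaining $r-k$ coordinates. For each choice of a weight-$\ell$ support pattern in the head block (there are $\binom{k}{\ell}$ of them, contributing the $\kappa H(\lambda/\kappa)$ term with $\ell=\lambda r$) and each choice of a weight-$(s-\ell)$ support in the tail block, the remaining freedom is to run the nonzero coordinates through $\F_2^*=\{1\}$ --- so over $q=2$ the set $\cG_r(s)$ is literally a disjoint union of affine shifts of coordinate subspaces. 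The key structural input is Lemma~\ref{lem:weilsub}: a mixed character sum $\sum_{v\in V+w}\chi(f_1(v))\psi(f_2(v))$ over a $\F_2$-linear subspace $V$ (or an affine translate) of dimension $m$ admits a square-root bound $\ll 2^{m/2}$, \emph{provided} the relevant non-degeneracy hypotheses on $f_1,f_2$ survive restriction to $V$; this is exactly what the hypotheses ``$f_1$ has a simple root in $\overline{\F_2}$'' and ``$f_2\in\cF_{d_2}(2)$'' are engineered to guarantee, via the translation identity $f_2(X+\omega_1)-f_2(X+\omega_2)\ne\alpha(g^p-g)+\beta X$ recorded after Definition~\ref{def'Fdq}.

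The heart of the argument is then a trade-off between three regimes, corresponding to the three functions $\mathsf f,\mathsf g,\mathsf h$. If the tail dimension available for a clean completed-sum bound is large, one gets a saving roughly $2^{-(1-\kappa)H^*((\rho-\lambda)/(1-\kappa))\,r}$ off the trivial count $\binom{r}{s}\le 2^{H(\rho)r+o(r)}$, which after optimizing over the head pattern yields the term $\mathsf f$. When that saving is weak (the tail support is small, so $(\rho-\lambda)/(1-\kappa)$ is near $0$ and $H^*$ of it is tiny), I would instead apply a second-moment / van der Corput type step: square the sum, expand, and exploit the difference polynomials $f_2(X+\omega_1)-f_2(X+\omega_2)$ which remain non-degenerate by the $\cF_{d_2}(2)$ hypothesis; this produces a bound governed by $\frac14+\frac12(H(\rho)+\dots)$, i.e.\ the function $\mathsf g$, the extra $\tfrac14$ being the usual loss from Cauchy--Schwarz combined with the square-root cancellation on the diagonal-free part. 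Finally, a purely combinatorial bound that ignores cancellation on the head block but uses it on a generic subspace of dimension about $r-k$ gives $\mathsf h=\tfrac12(H(\rho)+\kappa)$; this term dominates when $\kappa$ is forced small. Taking the maximum of the three (the true bound is whichever regime is active for the given $\kappa,\lambda$) and then minimizing over $\kappa\in(0,1)$ and $0<\lambda\le\kappa\rho$ produces the exponent $\eta(\rho)$. The passage from $s$ to $\min\{s,r-s\}$ in the definition of $\rho$ is the standard complementation symmetry $\wt(\nu)=s\iff \wt(\nu-\vartheta_1-\dots-\vartheta_r)=r-s$ (over $\F_2$), which lets us always assume $\rho\le\tfrac12$.

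The main obstacle, I expect, is the bookkeeping for the second-moment step that yields $\mathsf g$: after squaring $S_{s,r}$ one must re-express the resulting sum over pairs $(\nu_1,\nu_2)\in\cG_r(s)^2$ as a sum over $\omega=\nu_1-\nu_2$ with a controlled number of representations, and then show that for each fixed $\omega$ the inner sum $\sum_{\nu}\psi(f_2(\nu+\omega)-f_2(\nu))\chi(\dots)$ is again a completed sum over an affine subspace to which Lemma~\ref{lem:weilsub} applies --- this is where the ``more general condition'' after Definition~\ref{def'Fdq} is needed, and one must check the multiplicative factor $\chi$ does not spoil non-degeneracy (here the simple-root hypothesis on $f_1$ is used, together with the fact that $f_1(X+\omega)/f_1(X)$ is not a perfect $e$-th power). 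A secondary technical point is uniformity of the implied constants: all the $\ll$ constants depend on $d_1,d_2$ but not on $r,s$, so one must track how the number of zeros/poles of the various difference functions is bounded purely in terms of $d_1,d_2$, which is routine but must be stated. Everything else --- the entropy estimate $\binom{n}{\alpha n}=2^{H(\alpha)n+o(n)}$, the union bound over $2^{o(r)}$ many support patterns once $\lambda$ and $\kappa$ are fixed rationals approximating the optimizers, and absorbing all polynomial-in-$r$ factors into the $o(r)$ in the exponent --- is standard and I would relegate it to a short concluding paragraph.
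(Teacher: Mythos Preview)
Your high-level architecture matches the paper's: reduce to $s\le r/2$ by complementation, split the basis into a head block of length $k\approx\kappa r$ and a tail block, decompose $S_{s,r}=\sum_{t=0}^s T_t$ according to the head weight $t$, and combine a trivial bound with a Cauchy--Schwarz step that feeds into Lemma~\ref{lem:weilsub}. Your treatment of the simple-root hypothesis on $f_1$ (at most $d_1$ bad shifts $\omega$ with $f_1(X+\omega)/f_1(X)$ an $e$th power) and of the $\cF_{d_2}(2)$ hypothesis is also correct in spirit.

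However, your attribution of the three functions $\mathsf f,\mathsf g,\mathsf h$ to three separate mechanisms is wrong, and following your descriptions literally would not reproduce the stated formulas. Two concrete corrections:
\begin{itemize}
\item The parameter $\lambda$ is a \emph{threshold} on the head weight $t$, not a fixed head weight to be summed over. For $t<\ell=\lfloor\lambda r\rfloor$ the paper uses the \emph{trivial} bound $|T_t|\le\#\cV_t\,\#\cW_t=\binom{k}{t}\binom{r-k}{s-t}$; Lemma~\ref{max} then shows this is maximised (for $t\le\ell\le\kappa\rho r$) at $t=\ell$, giving exactly $\mathsf f(\rho;\kappa,\lambda)$. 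There is no Weil-type saving in this range; your description of $\mathsf f$ as ``a saving off the trivial count $\binom{r}{s}$'' is not what produces this term.
\item For $t\ge\ell$, Cauchy--Schwarz is applied \emph{inside each} $T_t$, in the head variable $\nu$ --- not by squaring the whole sum over $\cG_r(s)^2$. After extending the $\nu$-sum from $\cV_t$ to the full linear space $\cL_k$ one gets
\[
|T_t|^2\le \#\cV_t\sum_{\omega_1,\omega_2\in\cW_t}\ \sum_{\nu\in\cL_k}\chi\bigl(\tfrac{f_1(\nu+\omega_1)}{f_1(\nu+\omega_2)}\bigr)\,\psi\bigl(f_2(\nu+\omega_1)-f_2(\nu+\omega_2)\bigr).
\]
Off-diagonal pairs $(\omega_1,\omega_2)$ (minus $O(\#\cW_t)$ exceptional ones) get the bound $O(2^{r/2})$ from Lemma~\ref{lem:weilsub}; the $O(\#\cW_t)$ diagonal and exceptional pairs get the trivial bound $2^k=\#\cL_k$. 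Using $\#\cV_t\,\#\cW_t\le\#\cG_r(s)$, these two contributions yield $\mathsf g$ and $\mathsf h$ \emph{simultaneously} from the same inequality. In particular the $\kappa$ in $\mathsf h=\tfrac12(H(\rho)+\kappa)$ comes from the trivial bound on the \emph{head} space $\cL_k$, not from cancellation on a tail subspace of dimension $r-k$ as you wrote.
\end{itemize}
With this structure in place --- $\lambda$ as a cutoff, Cauchy--Schwarz over the head variable extended to $\cL_k$ --- the rest of your outline (entropy bounds for binomials, absorbing polynomial-in-$r$ factors into $o(r)$, and the final optimisation over $\kappa,\lambda$) goes through.
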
 

It is useful to observe that we have $\rho \le 1/2$ in Theorem~\ref{thm:general}.

Since  under the conditions of Theorem~\ref{thm:general}, using the standard bound on the growth of binomial coefficients,  we have
$$
\#\cG_r(s) = 2^{H(\rho) r + o(r)}
$$
see, for example,~\cite[Chapter~10, Lemma~7]{Sloane}, 
and we conclude that Theorem~\ref{thm:general} is nontrivial if $\eta(\rho) < H(\rho)$.

The following plot in Figure~\ref{fig:OldNew} compares the bounds of Theorem~\ref{thm:general} with the trivial bound $2^{H(\rho)}$, more precisely we compare $\eta(\rho)$ with $H(\rho)$. In particular, these bounds are nontrivial for 
$\rho \ge 1/5$.
More precisely, we have $H\(1/5\)> 0.7219$ and $\eta\(1/5\)<0.7208$.
Thus for $\rho =  1/5$ we save more than $0.001$ against the trivial bound.
(Finding the explicit intersection point $\rho$ with $H(\rho)=\eta(\rho)$ is computationally very extensive.)

\begin{figure}[h]
\centering
\includegraphics[scale=0.78]{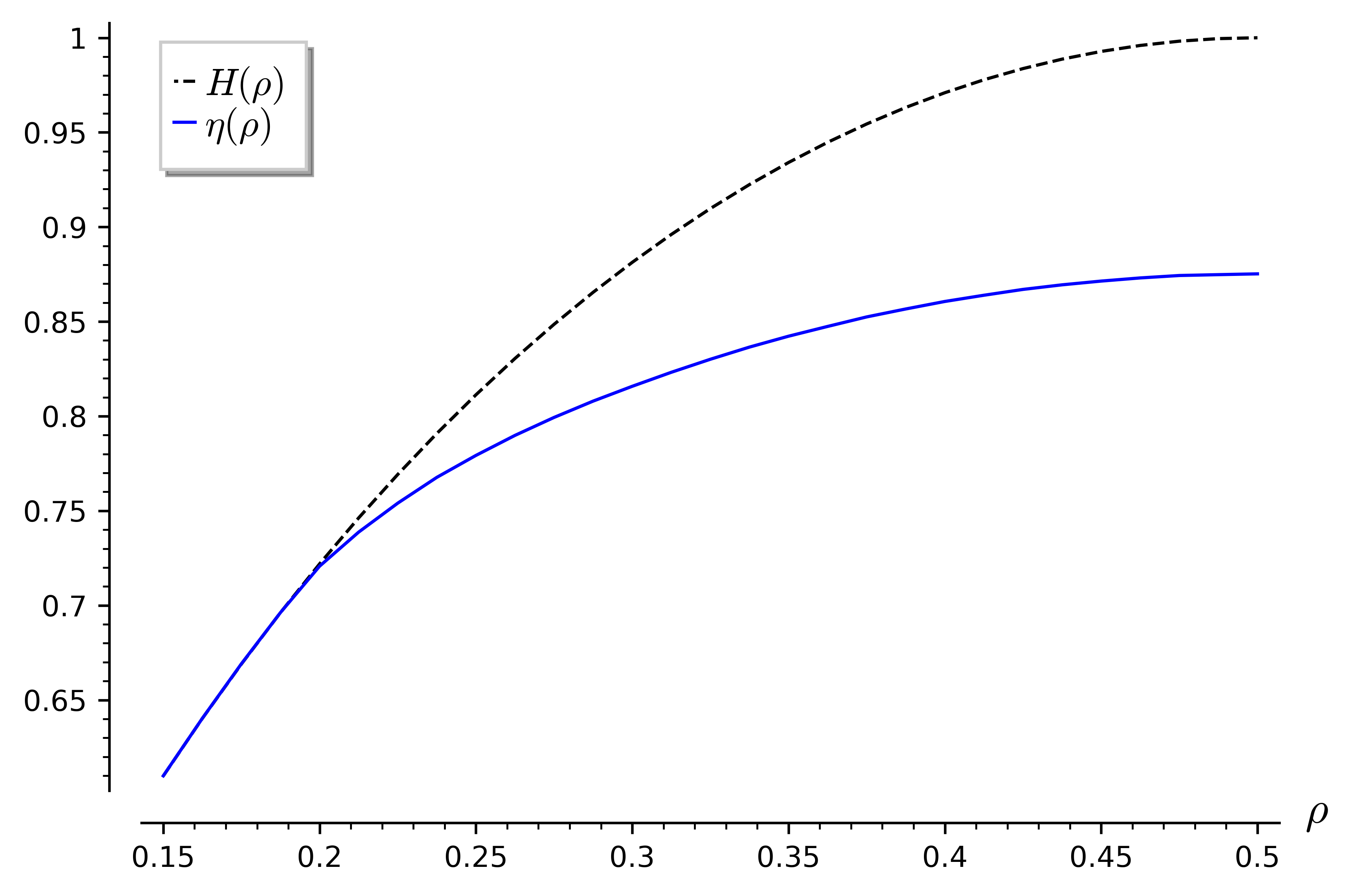}
 \caption{
$H(\rho)$ vs.\ $\eta(\rho)$
}
 \label{fig:OldNew}
\end{figure}

The function $\eta(\rho)$ is not easy to describe in a concise form. Hence, we give 
a simplified version of  Theorem~\ref{thm:general}, which is still nontrivial in a wide range of $\rho$.
Choosing $\kappa=3/4$ and $\lambda$ small enough, more precisely, any $\lambda>0$ with 
$$
H\(\frac{4\lambda}{3}\)\le \frac{4H(\rho)+1}{6},
$$ 
we get the bound 
\begin{equation}\label{etaH}
\eta(\rho)\le \frac{H(\rho)}{2}+\frac{3}{8},
\end{equation}
which is nontrivial if  $\rho> \rho_0$ where  $\rho_0 = 0.2145\ldots$ is 
the root of the equation $H(\rho_0) =  3/4$.
Note that $\eta(\rho)$ is strictly smaller than the right hand side of~\eqref{etaH} for~$\rho\le 0.275$.

It is easy to generalize this bound to any (fixed) $q>2$:
$$S_{s,r}(f)\ll \#\Grs^{1/2}q^{3r/8+o(r)}\le (2(q-1))^{H(\rho)r/2}q^{3r/8+o(r)}.$$

\begin{rem} Instead of using the Cauchy-Schwarz inequality in the proof of Theorem~\ref{thm:ratsuff}, more generally, one 
 can  also use the H\"older inequality. This leads us to investigate the frequency of the vectors 
$(\omega_1, \ldots, \omega_{2k})\in \F_{q^r}^{2k}$ for which 
$$
\sum_{j=1}^k \(f(X+\omega_j)-f(X+\omega_{k+j}) \) = \alpha\(g(X)^p-g(X)\)+\beta X 
$$
for some rational function $g(X)\in \ovFq(X)$  and  $\alpha,\beta\in \F_{q^r}$. Preferably, we  would like this to happen only if $\{\omega_1, \ldots, \omega_k\} = \{ \omega_{k+1}, \ldots, \omega_{2k})$. Although this may potentially lead to an extension of the range of parameters for which we have nontrivial bounds on the sums $S_{s,r}(\chi,\psi;f_1,f_2)$, since we do not have natural examples of polynomials satisfying  the above condition, we do not pursue this direction here.  
\end{rem}

We prove Theorem~\ref{thm:general} in Section~\ref{sec:general}.

\subsection{Families of rational functions from $\cF_d(q)$} 
Finally we discuss several cases of polynomials for which the conditions on $f_2(X)$ of Theorem~\ref{thm:general} are satisfied.
We start with monomials $f(X)=X^d$. 
Since 
$$\Tr(\alpha f(\xi))=\Tr(\alpha^p f(\xi)^p), \qquad \xi\in \F_{q^r},$$
the maxima over all nonprincipal characters  $\chi$ and $\psi$ of  the sums
$|S_{s,r}(\chi,\psi;f_1,f_2) |$ and $|S_{s,r}(\chi,\psi;f_1,f_2^p) |$ 
are the same. 
Therefore, we may restrict ourselves to the case $\gcd(d,p)=1$.

\begin{theorem}\label{thm:mon}
For any positive integer $d$ with $\gcd(d,p)=1$ 
and monomials $f(X)=X^{d}$, we have $f(X) \in \cF_d(q)$ if and only if 
$$
d\not\in \{1,2\}\cup \{p^k+1:~k=1,2,\ldots\}.
$$
\end{theorem}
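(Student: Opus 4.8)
The plan is to test directly, via Definition~\ref{def'Fdq}, when $X^d$ leaves the class $\cF_d(q)$: this fails precisely when $f_\omega(X)=(X+\omega)^d-X^d=\alpha\bigl(g(X)^p-g(X)\bigr)+\beta X$ for some $\omega\in\F_{q^r}^*$, some $\alpha,\beta\in\F_{q^r}$, and some $g\in\ovFq(X)$. Since $\gcd(d,p)=1$, the coefficient $d\omega$ of $X^{d-1}$ in $f_\omega$ is nonzero, so $\deg f_\omega=d-1$. First I would reduce to $g$ a polynomial and pin down its degree: if $\alpha\neq0$ and $g$ had a finite pole of order $\ell\ge1$, then $g^p-g$ would have a pole there of order exactly $p\ell$, contradicting that $f_\omega-\beta X=\alpha(g^p-g)$ is a polynomial; hence $g\in\ovFq[X]$. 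Then $\deg\bigl(\alpha(g^p-g)+\beta X\bigr)$ equals $p\deg g$ when $\deg g\ge1$ and is at most $1$ otherwise, so a representation forces either $d-1=pm$ with $m=\deg g\ge1$, or $d-1\le1$, i.e.\ $d\in\{1,2\}$.

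Next I would dispose of the ``bad'' values by exhibiting a representation. For $d=1$ we have $f_\omega=\omega$, and for $d=2$ (where $p$ is necessarily odd, since $\gcd(2,p)=1$) we have $f_\omega=2\omega X+\omega^2$; in both cases one absorbs the linear term into $\beta X$ and realises the constant term as $g_0^p-g_0$ for a suitable constant $g_0\in\ovFq$, using that $x\mapsto x^p-x$ is surjective on $\ovFq$. For $d=p^k+1$, the identity $(X+\omega)^{p^k}=X^{p^k}+\omega^{p^k}$ gives $f_\omega(X)=\omega X^{p^k}+\omega^{p^k}X+\omega^{p^k+1}$; taking the linearised polynomial $g(X)=e+X+X^p+\dots+X^{p^{k-1}}$ with $e\in\ovFq$ such that $e^p-e=\omega^{p^k}$ yields $g^p-g=X^{p^k}-X+\omega^{p^k}$, whence $f_\omega=\omega(g^p-g)+(\omega^{p^k}+\omega)X$, with $\alpha=\omega$ and $\beta=\omega^{p^k}+\omega$ in $\F_{q^r}$. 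Hence $X^d\notin\cF_d(q)$ for every $d\in\{1,2\}\cup\{p^k+1:k\ge1\}$.

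For the converse, suppose $d\ge3$ is not of the form $p^k+1$; I would show that no representation exists, for any $\omega\neq0$. If $p\nmid d-1$ this is immediate from the degree computation, so assume $d-1=pm$ with $m\ge1$; since $d\neq p^k+1$, $m$ is not a power of $p$. The key remark is that $\alpha(g^p-g)+\beta X$ with $\deg g=m$ is supported, as a polynomial, only on exponents lying in $\{0,1,\dots,m\}\cup\{p,2p,\dots,mp\}$, so it suffices to exhibit a nonzero monomial $cX^n$ of $f_\omega$ with $m<n\le pm$ and $p\nmid n$. I would take $n=p^{i'+1}+1$, where $i'$ is the position of the leading base-$p$ digit of $m$: then $p\nmid n$ and $m<p^{i'+1}<n$, while $n\le pm$ holds precisely because $m$ is not a power of $p$ (so $m>p^{i'}$, hence $pm\ge p^{i'+1}+1$). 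Finally, in base $p$ the digits of $pm+1$ are those of $m$ shifted up by one position together with a $1$ in the units place, so the digits of $n$ are termwise $\le$ those of $pm+1$; Lucas' theorem then gives $\binom{pm+1}{n}\not\equiv0\pmod p$, so the coefficient $\binom{pm+1}{n}\,\omega^{pm-p^{i'+1}}$ of $X^n$ in $f_\omega$ is nonzero (note $pm-p^{i'+1}\ge1$). This is the required obstruction, and therefore $X^d\in\cF_d(q)$.

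The heart of the argument --- and the step I expect to require the most care --- is this last combinatorial point: the base-$p$ digit description of $pm+1$ combined with Lucas' congruence is exactly what singles out $m=p^{i'}$ (equivalently $d=p^k+1$) as the sole exception, which, together with the degenerate values $d\in\{1,2\}$, yields precisely the exceptional set in the statement. By comparison, the reduction to polynomial $g$, the degree bookkeeping, and the explicit representations for the exceptional $d$ are routine once the linearised-polynomial identity $g^p-g=X^{p^k}-X+\text{(const)}$ is in hand.
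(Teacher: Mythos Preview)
Your proof is correct. The overall strategy---explicit Artin--Schreier representations for the exceptional $d$, and a Lucas-based coefficient obstruction for all other $d$---matches the paper's, but your execution of the obstruction step is different and somewhat cleaner. The paper differentiates: from $f_\omega=\alpha(g^p-g)+\beta X$ it deduces $f_\omega'=-\alpha g'+\beta$, hence $\deg f_\omega'<(d-1)/p$, and then splits into two cases (leading $p$-adic digit $d_k\ge2$ versus $d_k=1$ with a lower nonzero digit $d_i$) to locate, via Lucas applied to $\binom{d-1}{j}$ with $j=p^k$ or $j=p^i$, a coefficient of $f_\omega'$ that cannot vanish. You instead describe the monomial support of $\alpha(g^p-g)+\beta X$ directly as $\{0,\dots,m\}\cup p\{0,\dots,m\}$ and exhibit a single uniform witness exponent $n=p^{i'+1}+1$ whose coefficient $\binom{pm+1}{n}$ is congruent to the leading digit $c_{i'}\not\equiv0\pmod p$ by Lucas. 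Your route avoids both the derivative and the case split; the paper's route makes the link to $\deg g'$ more explicit. Both arguments ultimately rest on the same combinatorial fact that the base-$p$ digits of $d=pm+1$ are those of $m$ shifted up by one place, so that $m$ being a pure power of $p$ is exactly what kills the obstruction.
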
 
We prove Theorem~\ref{thm:mon} in Section~\ref{sec:mon}.

For reciprocal monomials we have the following result. Again we may restrict ourselves to the case $\gcd(d,p)=1$.
\begin{theorem}\label{thm:inv}
For any positive integer $d$ with $\gcd(d,p)=1$ and
$$f(X)=X^{-d}$$ 
we have
$$f(X)\in  \cF_d(q).$$  
\end{theorem}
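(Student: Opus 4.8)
The plan is to show that for $f(X)=X^{-d}$ with $\gcd(d,p)=1$ and any $\omega\in\F_{q^r}^*$, the difference
$$
f_{\omega}(X)=\frac{1}{(X+\omega)^d}-\frac{1}{X^d}
$$
is never of the form $\alpha(g(X)^p-g(X))+\beta X$ with $g(X)\in\ovFq(X)$, $\alpha,\beta\in\F_{q^r}$. The key structural observation is that $f_\omega(X)$ has poles at $X=0$ and at $X=-\omega$, each of exact order $d$, and these are the only poles. On the other side, if $h(X)=\alpha(g(X)^p-g(X))+\beta X$, then I would analyse the pole structure of $h$: at any pole $x_0\in\ovFq$ of $g$ of order $m$, the term $g(X)^p$ contributes a pole of order $pm$ while $g(X)$ contributes a pole of order $m$, so $h$ has a pole of order $pm$ there (the $g^p$ term dominates since $p>1$), and any pole of $h$ arises this way (the $\beta X$ term only adds a possible pole at infinity). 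Hence every finite pole of a function of that special form has order divisible by $p$. But the poles of $f_\omega$ have order exactly $d$, and $\gcd(d,p)=1$ forces $p\nmid d$ — a contradiction as soon as $f_\omega$ actually has a finite pole.

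The first step, then, is to verify that $f_\omega(X)$ is not a polynomial, i.e.\ that the poles at $0$ and $-\omega$ do not cancel. Writing $f_\omega(X)=\dfrac{X^d-(X+\omega)^d}{X^d(X+\omega)^d}$, the numerator $X^d-(X+\omega)^d$ has degree $d-1$ (its leading term $X^d$ cancels, and the next term is $-d\omega X^{d-1}$, which is nonzero precisely because $p\nmid d$ and $\omega\neq 0$), while the denominator has degree $2d$; moreover $X=0$ is not a root of the numerator since $X^d-(X+\omega)^d$ at $X=0$ equals $-\omega^d\neq0$, and likewise $X=-\omega$ gives $\omega^d\neq0$. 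So after cancellation $f_\omega$ still has a pole of order exactly $d$ at $X=0$ (and at $X=-\omega$), and in particular it is a nonconstant rational function with a genuine finite pole.

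The second step is to make rigorous the claim that $h(X)=\alpha(g(X)^p-g(X))+\beta X$ has all finite poles of order divisible by $p$. If $\alpha=0$ this is immediate since $\beta X$ has no finite pole. If $\alpha\neq0$, write $g=u/v$ in lowest terms; then $g^p-g=(u^p-uv^{p-1})/v^p$, and since $\gcd(u,v)=1$ we have $\gcd(u^p-uv^{p-1},v^p)=\gcd(u^p,v^p)=1$ unless $v$ is constant, so the finite poles of $g^p-g$ are exactly the roots of $v^p$, each of multiplicity $p\cdot(\text{mult. of the root in }v)$; adding $\beta X/\alpha$ (a polynomial) does not change finite poles. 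Combining the two steps: $f_\omega$ has a pole of order $d$ at $X=0$ with $p\nmid d$, so $f_\omega$ cannot equal any $h$ of the required form, for any $\alpha,\beta$ and any $g$. Since $\omega\in\F_{q^r}^*$ was arbitrary, $f(X)=X^{-d}\in\cF_d(q)$.

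The main obstacle — really the only point needing care — is the pole-order bookkeeping when $\alpha\neq0$: one must be sure that no cancellation between $g(X)^p$ and $g(X)$ can lower the pole order, which is exactly where $p\geq 2$ (so $pm>m$) is used, and one must handle the degenerate case where $g$ is a polynomial (then $g^p-g$ is a polynomial and $h$ has no finite pole at all, trivially excluding it). Everything else is elementary manipulation of the explicit rational function $f_\omega$.
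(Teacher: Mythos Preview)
Your proof is correct and follows essentially the same approach as the paper: both arguments reduce to the observation that if $g=a/b$ is in lowest terms then $g^p-g=(a^p-ab^{p-1})/b^p$ is also in lowest terms, so every finite pole of $\alpha(g^p-g)+\beta X$ has order divisible by $p$, whereas $f_\omega$ has a pole at $0$ of exact order $d$ with $p\nmid d$. The paper phrases this via clearing denominators and unique factorisation to force $v(X)=b(X)^p$ (hence $p\mid d$), while you phrase it directly in terms of pole orders; the content is the same.
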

We prove Theorem~\ref{thm:inv} in Section~\ref{sec:inv}.

For arbitrary polynomials we get the following general result (which 
unfortunately is void for $p=2$). 

\begin{theorem}\label{thm:pol} 
Let $p$ be the characteristic of $\F_{q^r}$ and let 
$d \equiv d_0 \bmod p$ with some integer $d_0 \in [2, p-1]$.
Then for any polynomial $f(X)\in \F_{q^r}[X]$ of degree 
$d\ge 3$ we have $f(X) \in \cF_d(q)$. 
\end{theorem}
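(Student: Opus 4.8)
The plan is to show directly that for a polynomial $f(X)\in\F_{q^r}[X]$ of degree $d\ge 3$ with $d\equiv d_0\bmod p$ and $2\le d_0\le p-1$, the difference polynomial $f_\omega(X)=f(X+\omega)-f(X)$ cannot have the shape $\alpha(g(X)^p-g(X))+\beta X$ for any $\omega\in\F_{q^r}^*$, $g(X)\in\ovFq(X)$, $\alpha,\beta\in\F_{q^r}$. First I would pin down the degree of $f_\omega$. Writing $f(X)=a_dX^d+\text{lower}$ with $a_d\ne 0$, the top terms of $f(X+\omega)-f(X)$ give a polynomial of degree exactly $d-1$, with leading coefficient $d\,a_d\omega$; since $d\equiv d_0\not\equiv 0\bmod p$ and $\omega\ne 0$, this leading coefficient is nonzero, so $\deg f_\omega=d-1\ge 2$. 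In particular $f_\omega$ is a nonzero polynomial (not just a rational function), so if it equals $\alpha(g(X)^p-g(X))+\beta X$ then $g(X)$ must in fact be a polynomial: a genuine pole of $g$ would survive in $g^p-g$ (the orders $p\cdot(\text{pole order})$ and pole order cannot cancel), forcing $\alpha=0$, which would make $f_\omega$ linear, contradicting $\deg f_\omega\ge 2$. So we may assume $g\in\ovFq[X]$, say $\deg g=m$.

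Next I would compare degrees on both sides of $f_\omega(X)=\alpha(g(X)^p-g(X))+\beta X$. The right-hand side, if $\alpha\ne 0$, has degree $pm$; matching with $\deg f_\omega=d-1$ forces $pm=d-1$, i.e.\ $d\equiv 1\bmod p$. But by hypothesis $d\equiv d_0\bmod p$ with $d_0\in[2,p-1]$, so $d\not\equiv 1\bmod p$ — contradiction. (If $\alpha=0$ we again get $f_\omega$ linear, impossible.) That is the entire argument; the congruence condition on $d$ is exactly what rules out the only possible degree match. I should double-check the edge case where $p\mid(d-1)$ could still occur with $d_0\ne1$: it cannot, since $d\equiv d_0$ and $d_0\ne 1$ pins $d\bmod p$ away from $1$; and the condition $d\ge 3$ together with $d_0\ge 2$ guarantees $d-1\ge 2$ so $f_\omega$ is genuinely nonlinear. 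One should also note the hypothesis is vacuous when $p=2$, as remarked, since $[2,p-1]=\emptyset$.

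The only genuinely non-routine point is the reduction from rational $g$ to polynomial $g$, i.e.\ justifying that $\alpha(g^p-g)$ has a pole wherever $g$ does, with the right multiplicity, so that no cancellation with $\beta X$ or within the expression can occur. This is a short valuation argument at each place of $\ovFq(X)$: at a pole of $g$ of order $e\ge1$, $g^p$ has pole order $pe>e$, so $g^p-g$ has pole order $pe$, and adding $\beta X$ (pole order $\le 1$ at infinity only) cannot kill it unless the pole is at infinity — but a pole at infinity just means $g$ is a polynomial of positive degree, which is the case we want. So the only way to avoid a finite pole on the left (where $f_\omega$ has none) is for $g$ to be a polynomial. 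After that, the degree/congruence count closes the proof immediately. I would present the valuation step cleanly and then the two-line degree comparison.
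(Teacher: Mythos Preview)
Your proof is correct and considerably more direct than the paper's argument. Both proofs rest on the same dichotomy---if $f_\omega=\alpha(g^p-g)+\beta X$ with $g$ a polynomial, then either $f_\omega$ has degree $\le 1$ or $\deg f_\omega\equiv 0\bmod p$---but you dispatch both branches in one stroke by computing the leading term: since $d\not\equiv 0\bmod p$ one has $\deg f_\omega=d-1\ge 2$, and since $d\not\equiv 1\bmod p$ one has $d-1\not\equiv 0\bmod p$. The paper instead extracts the top coefficients $R_{d-1},\ldots,R_{\ell_0}$ of $f_\omega$, builds a recursive family $T_\ell$ satisfying $T_\ell=\gamma_d\binom{d}{\ell}\omega^\ell$, and deduces $\omega=0$ from their vanishing; but already $T_1=R_{d-1}=\gamma_d\, d\,\omega$ suffices for this, which is precisely your leading-coefficient observation phrased differently. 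Your route is shorter and makes the role of the hypothesis $d_0\in[2,p-1]$ (namely, that it excludes exactly the residues $0$ and $1$) completely transparent. One small polish: when $g$ is constant the right-hand side has degree $\le 1$ rather than literally $pm=0$, but the contradiction with $d-1\ge 2$ is unchanged.
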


We prove Theorem~\ref{thm:pol} in Section~\ref{sec:pol}.

Finally, we prove the following sufficient condition for $f(X)\in \cF_d(q)$ for a rational function $f(X)$ in Section~\ref{sec:suff}. In contrast to Theorem~\ref{thm:pol}, it applies also to $p=2$.
\begin{theorem}\label{thm:ratsuff}
  Let $p$ be the characteristic of $\F_q$ and
  let $f(X)=u(X)/v(X)$ be a rational function with nonzero polynomials $u(X),v(X)\in \F_{q^r}[X]$ with $\gcd(u(X),v(X))=1$,  and such that
  $$\deg u\le \deg v+1 \mand \gcd\(\deg v,p\)=1.$$
 Assume that for all $\omega\in \F_{q^r}^*$, 
$ v(X)v(X+\omega)$ is not divisible by the $p$th power of a nonconstant polynomial.

  Then $f(X)\in \cF_d(q)$.
\end{theorem}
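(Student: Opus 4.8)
The plan is to argue by contradiction: suppose $f(X) \in \F_{q^r}(X)$ satisfies the stated hypotheses but $f \notin \cF_d(q)$, so that for some $\omega \in \F_{q^r}^*$ we have
$$
f_\omega(X) = f(X+\omega) - f(X) = \alpha\bigl(g(X)^p - g(X)\bigr) + \beta X
$$
for some $g(X) \in \ovFq(X)$ and $\alpha, \beta \in \F_{q^r}$. First I would compute the left-hand side as a ratio of polynomials: writing $f = u/v$ with $\gcd(u,v)=1$, one gets
$$
f_\omega(X) = \frac{u(X+\omega)v(X) - u(X)v(X+\omega)}{v(X+\omega)v(X)}.
$$
The denominator here is $v(X)v(X+\omega)$, and the key numerical input is the degree bound $\deg u \le \deg v + 1$, which forces $\deg f_\omega \le \deg v$ — more precisely the numerator has degree at most $\deg v + \deg(v(X+\omega)) = 2\deg v$ minus the leading-term cancellation, so $f_\omega$ has a pole only at the roots of $v(X)v(X+\omega)$ (in $\ovFq$) and its behaviour at infinity is controlled.

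The heart of the argument is then a local analysis at the poles of $f_\omega$. The right-hand side $\alpha(g^p - g) + \beta X$ is a rational function whose partial fraction decomposition at any finite pole $\gamma \in \ovFq$ has a very rigid shape: if $g$ has a pole of order $m$ at $\gamma$, then $g^p$ has a pole of order $pm$ there, while $g$ contributes a pole of order only $m < pm$; hence the polar part of $\alpha(g^p-g)$ at $\gamma$ is $\alpha$ times the polar part of $g^p$, which is the $p$th power (in the completed local field) of the polar part of $g$ — in particular, reading off the Laurent expansion at $\gamma$, every pole of $\alpha(g^p - g)+\beta X$ has order divisible by $p$ after we strip off the part coming from $-\alpha g$, and a clean way to phrase the obstruction is: the pole orders of the right-hand side, counted appropriately modulo $p$, are constrained. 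Meanwhile the left-hand side $f_\omega$ has its poles exactly at the roots of $v(X)v(X+\omega)$; by the hypothesis that $v(X)v(X+\omega)$ is not divisible by the $p$th power of a nonconstant polynomial, $f_\omega$ has at least one pole (say $\gamma$, a root of $v$ of multiplicity $\le$ something, or of $v(X+\omega)$) whose order is not a multiple of $p$ — here I would also need that the numerator $u(X+\omega)v(X) - u(X)v(X+\omega)$ does not vanish at $\gamma$ to high enough order to kill it, which uses $\gcd(u,v)=1$ (so $u(\gamma)\ne 0$ when $v(\gamma)=0$). Matching a pole of order $\not\equiv 0 \bmod p$ on the left against the $p$-divisible polar structure on the right gives the contradiction, provided the pole is genuinely finite; the condition $\gcd(\deg v, p) = 1$ is what I would use to rule out the remaining possibility that all the trouble hides at infinity (i.e. that $f_\omega$ has no finite pole of order prime to $p$, so that $v(X)v(X+\omega)$ is a $p$th power up to constants and degree considerations push everything to $\infty$, which $\gcd(\deg v,p)=1$ forbids).

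The main obstacle I anticipate is making the local-at-a-pole analysis of $\alpha(g^p - g) + \beta X$ fully rigorous when $\gamma$ is also a pole of $g$ of order not obviously related to the pole order of $f_\omega$: one has to show that cancellation between $\alpha g^p$ and $-\alpha g$ cannot conspire to produce a pole of order $\not\equiv 0 \bmod p$, and the clean statement is that if $h = g^p - g$ then at a pole $\gamma$ the order of $h$ is either $0$ or $\equiv 0 \bmod p$ (when $g$ has a pole at $\gamma$, since $\mathrm{ord}_\gamma(g^p) = p\,\mathrm{ord}_\gamma(g) < \mathrm{ord}_\gamma(g)$ in valuation terms, the strictly more negative $g^p$ dominates). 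Adding $\beta X$ only changes behaviour at infinity. So at any finite $\gamma$, $\mathrm{ord}_\gamma$ of the right-hand side is $\ge 0$ or divisible by $p$; exhibiting a finite pole of $f_\omega$ of order in $\{1,\dots,p-1\}$ modulo $p$ — which is exactly the role of the two divisibility hypotheses on $v(X)v(X+\omega)$ and $\deg v$ — completes the proof. I would carry this out by: (i) reducing to a finite pole via the degree-at-infinity computation using $\deg u \le \deg v+1$ and $\gcd(\deg v,p)=1$; (ii) locating a finite pole of $f_\omega$ of order $\not\equiv 0 \bmod p$ using $\gcd(u,v)=1$ and the $p$-power-free hypothesis; (iii) deriving the contradiction from the rigid polar structure of $\alpha(g^p-g)+\beta X$.
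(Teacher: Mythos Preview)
Your valuation-theoretic approach is sound and genuinely different from the paper's proof. The paper clears denominators to a polynomial identity, observes that $b(X)^p$ (the denominator of $g$) must divide $v(X)v(X+\omega)$ and hence $b$ is constant by the $p$-power-free hypothesis, and then finishes by a global degree comparison, case-splitting on whether $\alpha,\beta$ vanish. Your argument is cleaner and more conceptual: at any finite $\gamma$ where $g$ has a pole of order $m\ge 1$, the Laurent order of $g^p$ strictly dominates that of $g$, so $\alpha(g^p-g)+\beta X$ has pole order exactly $pm$; thus every finite pole of the right-hand side has order divisible by $p$, and it suffices to exhibit a finite pole of $f_\omega$ of order in $\{1,\dots,p-1\}$.

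The one place where your plan needs correction is the role you assign to $\gcd(\deg v,p)=1$. The $p$-power-free hypothesis bounds the multiplicity $m+m'$ of each root $\gamma$ in $v(X)v(X+\omega)$ by $p-1$, but this alone does not preclude cancellation: if $\gamma$ is a root of $v(X)$ and of $v(X+\omega)$ with \emph{equal} multiplicities $m=m'$, the principal parts of $f(X+\omega)$ and $f(X)$ at $\gamma$ may cancel completely, and your remark that $\gcd(u,v)=1$ keeps the numerator from vanishing does not address this. The hypothesis $\gcd(\deg v,p)=1$ is what rescues you, but not via infinity: it forces $v(X)\ne v(X+\omega)$ (compare the coefficient of $X^{\deg v-1}$), hence there is some root $\gamma$ with $m\ne m'$, and there $f_\omega$ has a pole of order exactly $\max(m,m')\le m+m'<p$. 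With this fix the argument goes through --- and in fact never uses $\deg u\le\deg v+1$, so your route yields a slightly stronger statement than the theorem as stated.
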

\begin{rem} The non-divisibility condition of Theorem~\ref{thm:ratsuff} is fulfilled if either $\deg v<\frac{p}{2}$
(which is again void for $p=2$) or $v(X)$ is irreducible and $\gcd(\deg v,p)=1$.
For the latter condition verify that the coefficients of $X^{\deg v-1}$ of $v(X)$ and $v(X+\omega)$ are different and thus $v(X)v(X+\omega)$ is squarefree.
\end{rem}

\section{Preliminary results}

\subsection{Bounds of character sums}

Our main tool is the Weil bound for mixed character sums of rational functions, 
which we present in a simplified  form which can be easily extracted 
from~\cite{CaMo} applied to an affine line (which is of genus zero).
See also~\cite[Theorem~5.6]{FW} for much more general result.


\begin{lemma}\label{lem:weil}  Let $\chi$ and $\psi$ be a multiplicative and additive character, respectively, and 
let $g_1(X) , g_2(X)\in \F_{q^r}(X)$ be rational functions of degrees $D_1$ and $D_2$ over $\F_{q^r}$, respectively. 
Assume that at least one of the following conditions holds
\begin{itemize}
\item $\chi$ is nonprincipal of order $e$ and $g_1(X)  \ne h(X)^e$ for all rational functions $h(X)\in\ovFq(X)$,
\item $\psi$ is nonprincipal and $g_2(X)  \ne \alpha\(h(X)^p-h(X)\)$ for all rational functions $h(X)\in\ovFq(X)$
and $\alpha \in \ovFq$. 
\end{itemize}
 Then we have
$$\left|\sum_{\xi \in \F_{q^r}}\chi(g_1(\xi))\psi\(g_2(\xi)\)\right| \le 
2(D_1+D_2)q^{r/2}.$$
\end{lemma}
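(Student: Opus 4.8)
The plan is to interpret the sum cohomologically and then invoke the Riemann Hypothesis for curves over finite fields in the explicit, conductor-sensitive form supplied by~\cite{CaMo}, specialised to the genus-zero curve $\mathbb{P}^1$. Write $Q=q^r$, so that $q^{r/2}=Q^{1/2}$, and regard the summation variable $\xi$ as ranging over the $\F_Q$-rational points of the open set $U\subset\mathbb{P}^1$ obtained by deleting the finitely many zeros and poles of $g_1$ and the poles of $g_2$; by the convention $\chi(0)=0$ and the exclusion of poles, these deleted points contribute nothing. First I would realise $\xi\mapsto\chi(g_1(\xi))\psi(g_2(\xi))$ as the Frobenius trace function of a rank-one $\ell$-adic sheaf $\cL=g_1^*\cM\otimes g_2^*\cN$ on $U$, where $\cM$ is the Kummer sheaf attached to $\chi$ and $\cN$ the Artin--Schreier sheaf attached to $\psi$. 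This $\cL$ is lisse on $U$ and pure of weight $0$.

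Next I would translate the two displayed hypotheses into the single geometric statement that $\cL$ is geometrically nontrivial. The factor $g_1^*\cM$ is geometrically trivial precisely when $g_1=h^e$ in $\ovFq(X)$, and $g_2^*\cN$ is geometrically trivial precisely when $g_2=\alpha(h^p-h)$ for suitable $h\in\ovFq(X)$ and $\alpha\in\ovFq$ reflecting the chosen $\psi$. The key point is that the Kummer factor is everywhere tame, whereas a nontrivial Artin--Schreier factor is wildly ramified at some pole of $g_2$; hence a nontrivial wild factor cannot be cancelled by a tame twist, and a nontrivial tame factor has tame ramification not introduced by any Artin--Schreier factor. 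Thus, whenever at least one of the two hypotheses holds, $\cL$ is geometrically nontrivial, so that $H^0_c(U_{\ovFq},\cL)=H^2_c(U_{\ovFq},\cL)=0$, and the Grothendieck--Lefschetz trace formula reduces the sum to $-\mathrm{Tr}\bigl(\mathrm{Frob}_Q\mid H^1_c(U_{\ovFq},\cL)\bigr)$.

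By Deligne's purity theorem the eigenvalues of $\mathrm{Frob}_Q$ on $H^1_c(U_{\ovFq},\cL)$ all have absolute value at most $Q^{1/2}=q^{r/2}$, so it remains to bound $\dim H^1_c$. For this I would apply the Grothendieck--Ogg--Shafarevich (Euler--Poincar\'e) formula: for a rank-one lisse sheaf on $U=\mathbb{P}^1\setminus S$ with $H^0_c=H^2_c=0$ one has $\dim H^1_c=(\#S-2)+\sum_{x\in S}\mathrm{Swan}_x(\cL)$. The singular set $S$ consists of the distinct zeros and poles of $g_1$, where $\cM$ is tame, together with the poles of $g_2$, where $\cN$ is wild, so $\#S\le 2D_1+D_2$. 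The Swan conductors come only from the Artin--Schreier factor, and at each pole of $g_2$ the Swan conductor is at most the pole order, whence $\sum_{x\in S}\mathrm{Swan}_x(\cL)\le D_2$ (the total pole order of $g_2$ equalling its degree $D_2$). Combining gives $\dim H^1_c\le 2D_1+2D_2-2\le 2(D_1+D_2)$, and therefore $\bigl|\sum_\xi\chi(g_1(\xi))\psi(g_2(\xi))\bigr|\le 2(D_1+D_2)q^{r/2}$. This is exactly the genus-zero specialisation packaged in~\cite{CaMo}, so in the write-up this paragraph can be compressed to a direct citation with $D_1,D_2$ inserted.

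The main obstacle is the conductor bookkeeping, where I expect two points to need care. First, the precise matching of the algebraic conditions ``$g_1\ne h^e$'' and ``$g_2\ne\alpha(h^p-h)$'' with geometric nontriviality of the respective sheaves, and the verification that nontriviality of one factor already forces nontriviality of the tensor product via the tame/wild dichotomy sketched above. Second, the wild-ramification estimate $\mathrm{Swan}_x\le(\text{pole order of }g_2)$, together with the correct accounting of the point at infinity and of any singularities shared by $g_1$ and $g_2$, so that the final tally does not exceed $2(D_1+D_2)$. Since all of this is precisely the content extracted from~\cite{CaMo} for a curve of genus zero, the cleanest route in the paper is to quote their explicit bound rather than to rederive the Euler--Poincar\'e computation in full.
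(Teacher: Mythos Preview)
Your sketch is correct and, as you yourself conclude in the last paragraph, the cleanest route is simply to cite~\cite{CaMo}; this is exactly what the paper does. The paper does not give any proof of this lemma at all: it merely records the statement as a simplified form ``which can be easily extracted from~\cite{CaMo} applied to an affine line (which is of genus zero)'', with an additional pointer to~\cite[Theorem~5.6]{FW}. So your proposal and the paper's treatment agree in substance, the only difference being that you spell out the cohomological bookkeeping (Kummer/Artin--Schreier sheaves, the tame/wild dichotomy forcing geometric nontriviality, and the Grothendieck--Ogg--Shafarevich count $\dim H^1_c\le (\#S-2)+\sum\mathrm{Swan}_x\le 2D_1+2D_2-2$) that the paper leaves entirely inside the citation.
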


We need the following version of~\cite[p.~470, Claim~(iii)]{Win}.

\begin{lemma}\label{lem:weilsub} Let $\chi$ and $\psi$ be a multiplicative and additive character, respectively, and 
let $g_1(X) , g_2(X)\in \F_{q^r}(X)$ be rational functions of degrees $D_1$ and $D_2$ over $\F_{q^r}$, respectively. 
Assume that at least one of the following conditions holds
\begin{itemize}
\item $\chi$ is nonprincipal of order $e$ and $g_1(X)  \ne h(X)^e$ for all rational functions $h(X)\in\ovFq(X)$,
\item $\psi$ is nonprincipal and $g_2(X)  \ne \alpha(h(X)^p-h(X))+\beta X$ for all rational functions $h(X)\in\ovFq(X)$
and $\alpha, \beta \in \ovFq$. 
\end{itemize}
Then for any linear subspace of $\Fqr$ 
\begin{equation}\label{Lk} \cL_k = \{u_1\vartheta_1+ \ldots +u_k \vartheta_k:~u_1, \ldots, u_k \in \Fq\}
\end{equation}
of dimension $k \le r$, 
we have 
$$
\left| \sum_{\nu \in  \cL_k}\chi\(g_1(\nu)\)\psi\(g_2(\nu)\)\right|\le 2(D_1+\max\{D_2,2\})q^{r/2}.
$$
\end{lemma}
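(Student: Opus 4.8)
The plan is to reduce the sum over the linear subspace $\cL_k$ to the full sum over $\F_{q^r}$ treated in Lemma~\ref{lem:weil}, by expressing the indicator function of $\cL_k$ as an average of additive characters of $\F_{q^r}$. Concretely, fix a nonprincipal additive character $\psi_1$ of $\F_{q^r}$ and, completing $(\vartheta_1,\ldots,\vartheta_r)$ to the given ordered basis, use the fact that membership $\nu\in\cL_k$ is detected by a system of $r-k$ linear conditions on the coordinates of $\nu$. Each such linear condition can be written as $\ell_j(\nu)=0$ for a suitable $\F_q$-linear form $\ell_j:\F_{q^r}\to\F_q$, and since every $\F_q$-linear form is of the shape $\nu\mapsto\Tr_{\F_{q^r}/\F_q}(c_j\nu)$ for some $c_j\in\F_{q^r}$, we get
$$
\mathbf 1[\nu\in\cL_k] \;=\; \prod_{j=1}^{r-k} \frac1q\sum_{t_j\in\F_q} \e_p\!\left(\Tr_{\F_q/\F_p}\!\left(t_j\,\Tr_{\F_{q^r}/\F_q}(c_j\nu)\right)\right)
\;=\; \frac{1}{q^{r-k}}\sum_{\tau} \psi_\tau(\nu),
$$
where, as $\tau$ ranges over $\F_q^{r-k}$ (equivalently over a set of $q^{r-k}$ elements $\sum_j t_j c_j\in\F_{q^r}$), $\psi_\tau$ runs over the additive characters of $\F_{q^r}$ that are trivial on $\cL_k$.

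**Next I would** substitute this into $\sum_{\nu\in\cL_k}\chi(g_1(\nu))\psi(g_2(\nu))$, extend the summation to all of $\F_{q^r}$, and interchange the order of summation:
$$
\sum_{\nu\in\cL_k}\chi(g_1(\nu))\psi(g_2(\nu))
= \frac{1}{q^{r-k}} \sum_{\tau} \sum_{\nu\in\F_{q^r}} \chi(g_1(\nu))\,\psi(g_2(\nu))\psi_\tau(\nu).
$$
For each $\tau$, the character $\psi\psi_\tau$ is itself an additive character of $\F_{q^r}$, and $\psi(g_2(\nu))\psi_\tau(\nu) = \psi_\tau'(g_2(\nu)+\alpha_\tau\nu)$-type expression; more precisely, writing $\psi(\xi)=\e_p(\Tr(\zeta\xi))$ and $\psi_\tau(\xi)=\e_p(\Tr(\zeta_\tau\xi))$, the inner sum equals $\sum_{\nu}\chi(g_1(\nu))\,\psi_1\big(g_2(\nu)+(\zeta_\tau/\zeta)\,\nu\big)$ for a fixed nonprincipal $\psi_1=\e_p(\Tr(\zeta\,\cdot\,))$, which is a mixed sum of the type covered by Lemma~\ref{lem:weil} with the rational function $g_2(X)$ replaced by $g_2(X)+\beta_\tau X$ for some $\beta_\tau\in\F_{q^r}$ (and $\beta_\tau=0$ when $\psi_\tau$ is principal).

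**The key point** is checking that for every $\tau$ one of the two alternatives of Lemma~\ref{lem:weil} remains valid, so that we get the uniform bound $2(D_1+\max\{D_2,2\})q^{r/2}$ on each of the $q^{r-k}$ inner sums; summing and dividing by $q^{r-k}$ then yields exactly the claimed estimate. If the $\chi$-hypothesis holds, this is immediate since $g_1$ is untouched. If instead the $\psi$-hypothesis holds, we must verify that $g_2(X)+\beta_\tau X \ne \alpha(h(X)^p-h(X))$ for all $h\in\ovFq(X)$ and $\alpha\in\ovFq$ — but this is precisely the hypothesis $g_2(X)\ne\alpha(h(X)^p-h(X))+\beta X$ with $\beta=-\beta_\tau$, which is exactly why Lemma~\ref{lem:weilsub} is stated with the extra linear term $\beta X$ compared to Lemma~\ref{lem:weil}. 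One degenerate case needs a separate trivial treatment: when $\psi\psi_\tau$ is the \emph{principal} character (this occurs when $\chi$ is nonprincipal and $\psi$ happens to be trivial on $\cL_k$), the inner sum becomes a pure multiplicative character sum $\sum_\nu\chi(g_1(\nu))$, still covered by the $\chi$-alternative of Lemma~\ref{lem:weil} with $D_2=0$, and here the $\max\{D_2,2\}$ slack absorbs everything.

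**The main obstacle** I anticipate is purely bookkeeping rather than conceptual: one must be careful that the additive characters $\psi\psi_\tau$ genuinely range over all $q^{r-k}$ characters trivial on $\cL_k$ shifted by $\psi$, that the associated shifts $\beta_\tau$ are well-defined elements of $\F_{q^r}$ (using $\zeta\ne0$), and that the case analysis above is exhaustive. There is no hard analytic input beyond Lemma~\ref{lem:weil}; the entire content is the orthogonality relation for the subspace indicator together with the observation that the hypotheses of Lemma~\ref{lem:weilsub} were designed to be stable under the linear shifts $g_2(X)\mapsto g_2(X)+\beta X$ that this averaging produces.
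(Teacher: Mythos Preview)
Your proposal is correct and is essentially the same argument as the paper's: both detect membership in $\cL_k$ by averaging over the additive characters of $\F_{q^r}$ that are trivial on $\cL_k$ (the paper hides this behind the citations to~\cite[Lemmas~3.1 and~3.4]{Win}), thereby reducing to a full sum over $\F_{q^r}$ with $g_2(X)$ replaced by $g_2(X)-\beta X$, and then both invoke Lemma~\ref{lem:weil}. Your bookkeeping is slightly more explicit than the paper's, and your treatment of the case where $\psi$ is principal matches the paper's second paragraph; the only minor imprecision is that when the $\psi$-hypothesis holds your rewriting already keeps the nonprincipal $\psi$ as the character (so the ``$\psi\psi_\tau$ principal'' degenerate case never actually needs separate handling there), but this does not affect correctness.
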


\begin{proof}
If $\psi$ is a  nonprincipal character, then combining the analogue of~\cite[Lemma~3.1]{Win} for mixed sums and~\cite[Lemma~3.4]{Win} we get
$$\left| \sum_{\nu \in  \cL_k}\chi\(g_1(\nu)\)\psi\(g_2(\nu)\)\right|
\le \max_{\beta\in \F_{q^r}}\left|\sum_{\nu\in \F_{q^r}}\chi\(g_1(\nu)\)\psi\(g_2(\nu)-\beta\nu\)\right|$$
and the result follows from Lemma~\ref{lem:weil}.

Otherwise we have for any nonprincipal additive character $\widetilde \psi$,
$$ \left| \sum_{\nu \in  \cL_k}\chi\(g_1(\nu)\)\psi\(g_2(\nu)\)\right|
\le \max_{\beta\in \F_{q^r}}\left|\sum_{\nu\in \F_{q^r}}\chi\(g_1(\nu)\)\widetilde \psi\(-\beta\nu\)\right| \le  (2D_1+2)q^{r/2}
$$
by Lemma~\ref{lem:weil}.
\end{proof}

It is quite obvious that  the restriction on the polynomial $g_1(X)$ in Lemma~\ref{lem:weilsub} 
is necessary. 
We now  show that the restriction on the polynomial $g_2(X)$ in Lemma~\ref{lem:weilsub} 
is  also necessary.

\begin{example}Assume that $g_2(X)=\alpha(h(X)^p-h(X))+\beta X$ for some $\alpha,\beta\in \F_{q^r}$. We may assume $\alpha\ne 0$.  Consider the character 
$$
\psi(\xi)=\ep\(\Tr\(\alpha^{-1}\xi\)\), \quad \xi\in \F_{q^r}
$$ 
and the trivial character $\chi=\chi_0$. 
Then we have $\Tr(\alpha^{-1}g_2(\xi))=\Tr\(\alpha^{-1}\beta\xi\)$ for $\xi\in \F_{q^r}$. Let 
$\cL_{r-1}$ be the kernel of $\Tr\(\alpha^{-1}\beta X\)$, which has dimension $r-1$. Then for any $\xi\in \cL_{r-1}$ we have $\Tr\(\alpha^{-1}g_2(\xi)\)=0$, thus
$$\sum_{\nu\in \cL_{r-1}}\psi\(g_2(\nu)\)=q^{r-1}.$$
\end{example}

\begin{rem} We note that the results of Ostafe~\cite{Ost} allow us
to estimate character sums over linear subspaces of much lower dimension. 
However, this bound applies only to polynomials and subspaces of a very special form and seems to be not suitable for our purposes.
\end{rem} 

%

\subsection{Some properties of binomial coefficients}

We recall the definition~\eqref{eq:bin entrop} of the entropy function~$H$.
We frequently use the following result from~\cite[Chapter~10, Corollary~9]{Sloane}:

\begin{lemma}
\label{lem:BinCoeffs}
For any natural number $n$ and $0<\gamma \le 1/2$, we have
$$
    \sum_{0\leq k\leq\gamma n}\binom{n}{k}\leq 2^{nH(\gamma)}.
$$
\end{lemma}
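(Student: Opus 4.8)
## Proof proposal for Lemma~\ref{lem:BinCoeffs}

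The plan is to prove this classical tail bound by a single clever substitution into the binomial theorem, which is the standard trick for this kind of estimate. First I would fix $n$ and $\gamma$ with $0<\gamma\le 1/2$, and note that since $\gamma\le 1/2$ we have $\gamma/(1-\gamma)\le 1$, which is the key inequality that makes the argument go through. The idea is to compare each term $\binom{n}{k}$ for $k\le\gamma n$ against $\binom{n}{k}(\gamma/(1-\gamma))^{k-\gamma n}$; because the exponent $k-\gamma n$ is nonpositive and the base is at most $1$, we have $(\gamma/(1-\gamma))^{k-\gamma n}\ge 1$, so multiplying in this factor only increases each term.

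Concretely, I would write
$$
\sum_{0\le k\le \gamma n}\binom{n}{k}
\le \sum_{0\le k\le \gamma n}\binom{n}{k}\(\frac{\gamma}{1-\gamma}\)^{k-\gamma n}
\le \(\frac{\gamma}{1-\gamma}\)^{-\gamma n}\sum_{k=0}^{n}\binom{n}{k}\(\frac{\gamma}{1-\gamma}\)^{k},
$$
where in the last step I drop the restriction $k\le\gamma n$ and sum over all $k$ from $0$ to $n$, all added terms being nonnegative. Now the binomial theorem gives $\sum_{k=0}^n\binom{n}{k}(\gamma/(1-\gamma))^k=(1+\gamma/(1-\gamma))^n=(1-\gamma)^{-n}$. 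Substituting this in, the right-hand side becomes
$$
\(\frac{\gamma}{1-\gamma}\)^{-\gamma n}(1-\gamma)^{-n}
=\gamma^{-\gamma n}(1-\gamma)^{\gamma n}(1-\gamma)^{-n}
=\gamma^{-\gamma n}(1-\gamma)^{-(1-\gamma)n}.
$$
Taking $\log_2$, this equals $2^{n(-\gamma\log_2\gamma-(1-\gamma)\log_2(1-\gamma))}=2^{nH(\gamma)}$ by the definition~\eqref{eq:bin entrop} of $H$, which is exactly the claimed bound.

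There is no real obstacle here; the only point requiring a little care is the direction of the inequality $(\gamma/(1-\gamma))^{k-\gamma n}\ge 1$, which relies on $\gamma\le 1/2$ (so the base is $\le 1$) together with $k\le\gamma n$ (so the exponent is $\le 0$). One should also note the degenerate cases: if $\gamma=1/2$ the bound reads $\sum_{k\le n/2}\binom{n}{k}\le 2^n$, which is trivial, and for $\gamma n<1$ the left side is just $\binom{n}{0}=1\le 2^{nH(\gamma)}$ since $H(\gamma)\ge 0$; the computation above handles all these uniformly. Alternatively, since this is quoted verbatim from~\cite[Chapter~10, Corollary~9]{Sloane}, one could simply cite it, but the self-contained argument above is short enough to include.
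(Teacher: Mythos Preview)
Your argument is correct; this is the standard Chernoff-type proof of the entropy tail bound, and every step is clean, including the crucial use of $\gamma\le 1/2$ to ensure $(\gamma/(1-\gamma))^{k-\gamma n}\ge 1$ for $k\le\gamma n$.

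The paper itself does not prove this lemma at all: it simply quotes the inequality from~\cite[Chapter~10, Corollary~9]{Sloane}. So there is no ``paper's proof'' to compare against beyond the citation. Your self-contained derivation is exactly the argument that underlies that cited result, so including it adds nothing new mathematically but does make the exposition self-contained; either citing or proving is acceptable here.
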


We also have the following technical result. 

\begin{lemma}\label{max}
For fixed real $\kappa$ and $\rho$ with
$$0<\kappa<1\quad \mbox{and}\quad 0<\rho\le \frac{1}{2}$$
the function
$$E(\lambda)=\kappa H^*\left(\frac{\lambda}{\kappa}\right)+(1-\kappa)H^*\left(\frac{\rho-\lambda}{1-\kappa}\right), \qquad 0<\lambda <\rho,$$
is monotonically increasing in 
$\left(0,\kappa\rho\right]$
and monotonically decreasing in~
$\left(\kappa \rho,\rho\right)$. 
\end{lemma}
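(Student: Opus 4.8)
The plan is to analyze the single-variable function $E(\lambda)$ by splitting the interval $(0,\rho)$ according to where each of the two arguments $\lambda/\kappa$ and $(\rho-\lambda)/(1-\kappa)$ falls relative to $1/2$, since $H^*$ behaves differently on each side. First I would observe that on the subinterval where both arguments are at most $1/2$ — which, since $\rho \le 1/2$ forces $\lambda \le \kappa\rho \le \kappa/2$ and $\rho-\lambda \le \rho(1-\kappa) \cdot (\text{something}) \le (1-\kappa)/2$ need checking — the function coincides with $\kappa H(\lambda/\kappa) + (1-\kappa) H((\rho-\lambda)/(1-\kappa))$, an honest smooth function of $\lambda$ on $(0,\rho)$. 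A direct differentiation gives
$$
E'(\lambda) = \log_2\frac{1-\lambda/\kappa}{\lambda/\kappa} - \log_2\frac{1-(\rho-\lambda)/(1-\kappa)}{(\rho-\lambda)/(1-\kappa)},
$$
using $\frac{d}{d\gamma}H(\gamma) = \log_2\frac{1-\gamma}{\gamma}$. Since $\gamma \mapsto \log_2\frac{1-\gamma}{\gamma}$ is strictly decreasing on $(0,1)$, $E'(\lambda) > 0$ exactly when $\lambda/\kappa < (\rho-\lambda)/(1-\kappa)$, i.e. when $\lambda(1-\kappa) < \kappa(\rho-\lambda)$, i.e. when $\lambda < \kappa\rho$; and $E'(\lambda) < 0$ when $\lambda > \kappa\rho$. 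This already proves the claim wherever $E$ is smooth in the "honest $H$" regime.

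Next I would handle the regimes where one of the arguments exceeds $1/2$, so that $H^*$ is flat (equal to $1$) in that variable. The key point is that $H^*$ is still continuous, nondecreasing on $[0,1/2]$ and constant on $[1/2,1]$; hence it is a nondecreasing-then-constant function overall, and in particular it is still concave and still has a left/right derivative everywhere on $(0,1)$ that equals $\log_2\frac{1-\gamma}{\gamma} \ge 0$ for $\gamma < 1/2$ and equals $0$ for $\gamma > 1/2$. So on any subinterval of $(0,\rho)$, $E$ is a sum of a (possibly constant) nondecreasing concave function of $\lambda$ and a (possibly constant) nonincreasing concave function of $\lambda$; the one-sided derivative of $E$ is
$$
E'_\pm(\lambda) = \mathbf 1[\lambda < \kappa/2]\,\log_2\frac{\kappa-\lambda}{\lambda} - \mathbf 1[\rho-\lambda < (1-\kappa)/2]\,\log_2\frac{(1-\kappa)-(\rho-\lambda)}{\rho-\lambda},
$$
with the obvious interpretation at the breakpoints. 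I would check that the sign of this expression is $>0$ for $\lambda < \kappa\rho$ and $<0$ for $\lambda > \kappa\rho$ in every regime: when $\lambda < \kappa\rho \le \kappa/2$ the first indicator is on, and if the second indicator is also on the sign analysis above applies verbatim, while if the second indicator is off (meaning $\rho - \lambda \ge (1-\kappa)/2$) the derivative is strictly positive outright; symmetrically for $\lambda > \kappa\rho$. Finally, across the breakpoint $\lambda = \kappa\rho$ one should just note continuity of $E$ and that the left derivative there is $\ge 0$ and the right derivative is $\le 0$, so $\kappa\rho$ is the global maximum and monotonicity on each side follows.

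The main obstacle I anticipate is purely bookkeeping: carefully enumerating which of the at most four sub-regimes (determined by whether $\lambda \lessgtr \kappa/2$ and whether $\rho - \lambda \lessgtr (1-\kappa)/2$) actually occur for given $\kappa,\rho$, and confirming in each that the derivative sign is governed by the single threshold $\lambda = \kappa\rho$. One should also double-check the boundary behaviour as $\lambda \to 0^+$ and $\lambda \to \rho^-$ (where $H$ or $H^*$ hits an endpoint and its derivative blows up to $+\infty$, which only reinforces the claimed monotonicity), and that $\kappa\rho$ indeed lies in $(0,\rho)$, which is immediate from $0<\kappa<1$. No new machinery is needed beyond the elementary calculus of $H$ and the defining case split for $H^*$.
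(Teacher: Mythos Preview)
Your proposal is correct and follows essentially the same approach as the paper: compute the derivative via $H'(\gamma)=\log_2\frac{1-\gamma}{\gamma}$, split according to whether each argument of $H^*$ lies below or above $1/2$, and reduce the sign of $E'(\lambda)$ to the comparison $\lambda \lessgtr \kappa\rho$. The paper organises the case split as three explicit subintervals of $(0,\rho)$ (namely $0<\lambda\le \rho-(1-\kappa)/2$, then $\max\{0,\rho-(1-\kappa)/2\}<\lambda\le \kappa/2$, then $\kappa/2<\lambda<\rho$), whereas you package the same split into a single indicator-function formula; the key inequality $\rho-(1-\kappa)/2\le\kappa\rho$ (equivalent to $\rho\le 1/2$) that rules out the ``bad'' mixed regime for $\lambda>\kappa\rho$ is used in both arguments.
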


\begin{proof}
It is easy to see that 
$$H'(\gamma)=-\log_2\(\frac{\gamma}{1-\gamma}\),\quad 0<\gamma<1,$$
and thus
$H(\gamma)$ is monotonically increasing in $\(0,\frac{1}{2}\]$.

For 
$$
0<\lambda\le\rho-\frac{1-\kappa}{2} = \frac{\kappa}{2} - \(\frac{1}{2}-\rho\) 
$$
we have
$$E(\lambda)=\kappa H\left(\frac{\lambda}{\kappa}\right)+(1-\kappa)$$
which is monotonically increasing in $\(0,\frac{\kappa}{2}\]$.

For 
$$\max\left\{\rho-\frac{1-\kappa}{2},0\right\}< \lambda\le \frac{\kappa}{2}
$$
we have
$$E(\lambda)=\kappa H\(\frac{\lambda}{\kappa}\)+(1-\kappa)H\(\frac{\rho-\lambda}{1-\kappa}\)$$
and
\begin{align*}
E'(\lambda)&=H'\left(\frac{\lambda}{\kappa}\right)-H'\left(\frac{\rho-\lambda}{1-\kappa}\right)\\
&=- \log_2 \left(\frac{\lambda}{\kappa-\lambda}\right)+
\log_2 \left(\frac{\rho-\lambda}{1-\kappa-\rho+\lambda}\right).
\end{align*}  
It is also useful to note that because $\rho \le 1/2$ we have
$$
\rho-\frac{1-\kappa}{2} \le \kappa \rho.
$$

Verify that 
$$\frac{\rho-\lambda}{1-\kappa-\rho+\lambda}\ge \frac{\lambda}{\kappa-\lambda}
\quad \mbox{if and only if}\quad \lambda\le\rho\kappa$$
and thus
$$E'(\lambda)\ge 0,\qquad \rho-\frac{1-\kappa}{2}\le \lambda\le \rho\kappa,$$
and 
$$E'(\lambda)<0,\qquad \frac{\kappa}{2}\ge \lambda >\rho\kappa.$$ 
Hence, $E(\lambda)$ has a local maximum at $\lambda=\kappa\rho$.

Finally, for 
$$\frac{\kappa}{2}<\lambda< \rho
$$
we get
$$E(\lambda)=\kappa+(1-\kappa)H\left(\frac{\rho-\lambda}{1-\kappa}\right),$$
which is monotonically decreasing since 
$$
0 < \frac{\rho-\lambda}{1-\kappa} < \frac{1}{2}
$$
in this range (recall again that $\rho \le 1/2$).
\end{proof}

Next we state the well-known Lucas congruence, see, for example,~\cite[Lemma~6.3.10]{NiWi}.

\begin{lemma}\label{lem:lucas}
If $m$ and $n$ are two natural numbers with the following $p$-adic expansions
\begin{align*}
&m=m_{r-1}p^{r-1}+\ldots+m_1p+m_0,\qquad 0 \le m_0,\ldots,m_{r-1}< p,\\
&n=n_{r-1}p^{r-1} +\ldots+ n_1p+n_0,\qquad 0 \le n_0,\ldots,n_{r-1}< p,
\end{align*}
then we have
$$
\binom{m}{n} \equiv \prod_{j=0}^{r-1}  \binom{m_j}{n_j} \bmod p.
$$
\end{lemma}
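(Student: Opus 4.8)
The plan is to prove the congruence by a generating-function argument in the polynomial ring $\F_p[X]$, comparing coefficients in two expressions for $(1+X)^m$ reduced modulo~$p$. The coefficient of $X^n$ in $(1+X)^m$ is by the binomial theorem exactly $\binom{m}{n}$, so it suffices to factor $(1+X)^m$ according to the $p$-adic digits of~$m$ and read off the coefficient of $X^n$ from the resulting product.

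First I would establish the \emph{Frobenius} (or ``freshman's dream'') identity $(1+X)^{p^j}\equiv 1+X^{p^j}\bmod p$ for every $j\ge 0$. For $j=1$ this follows from $(1+X)^p=\sum_{k=0}^p\binom{p}{k}X^k$ together with the fact that $p\mid\binom{p}{k}$ for $1\le k\le p-1$, since $p$ divides the numerator $p!$ but none of the factors in the denominator $k!(p-k)!$. The general case then follows by induction on~$j$, raising the previous congruence to the $p$th power and applying the case $j=1$ again to $1+X^{p^{j-1}}$.

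Next, using the $p$-adic expansion $m=\sum_{j=0}^{r-1}m_jp^j$ I would write
$$
(1+X)^m=\prod_{j=0}^{r-1}\left((1+X)^{p^j}\right)^{m_j}\equiv\prod_{j=0}^{r-1}\left(1+X^{p^j}\right)^{m_j}\bmod p,
$$
and then expand each factor by the binomial theorem as $\left(1+X^{p^j}\right)^{m_j}=\sum_{i=0}^{m_j}\binom{m_j}{i}X^{ip^j}$. Multiplying these out, the coefficient of $X^n$ modulo~$p$ equals
$$
\sum\ \prod_{j=0}^{r-1}\binom{m_j}{i_j},
$$
the sum being over all tuples $(i_0,\ldots,i_{r-1})$ with $0\le i_j\le m_j$ and $\sum_{j=0}^{r-1}i_jp^j=n$.

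The last and most delicate step is to identify this coefficient with $\prod_j\binom{m_j}{n_j}$, and I expect the only real care to be needed here. Since each digit satisfies $m_j<p$, every admissible index obeys $i_j\le m_j<p$, so the relation $\sum_j i_jp^j=n$ together with the \emph{uniqueness} of the base-$p$ representation of~$n$ forces $i_j=n_j$ for all~$j$; hence at most one tuple contributes. If $n_j\le m_j$ for every~$j$, that single tuple gives coefficient $\prod_j\binom{m_j}{n_j}$; if instead $n_j>m_j$ for some~$j$, then no admissible tuple exists and the coefficient is~$0$, which still equals $\prod_j\binom{m_j}{n_j}$ under the standard convention $\binom{m_j}{n_j}=0$ for $n_j>m_j$. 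Comparing with the left-hand coefficient $\binom{m}{n}$ yields the claimed congruence; the bound $i_j<p$ is precisely what guarantees the uniqueness used in the coefficient-matching, and the remaining bookkeeping is routine.
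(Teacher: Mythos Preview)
Your argument is correct and is the standard generating-function proof of Lucas' theorem: the Frobenius identity $(1+X)^{p^j}\equiv 1+X^{p^j}$ in $\F_p[X]$, the factorisation of $(1+X)^m$ along the base-$p$ digits of~$m$, and the coefficient extraction via uniqueness of base-$p$ expansions are all handled cleanly, including the case $n_j>m_j$.

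By way of comparison, the paper does not prove this lemma at all; it merely records it as the ``well-known Lucas congruence'' and refers to \cite[Lemma~6.3.10]{NiWi}. So there is no approach in the paper to match against, and your self-contained proof is a genuine addition rather than a reworking of anything the authors wrote.
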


\section{Proofs of bounds on character sums} 
\subsection{Proof of Theorem~\ref{thm:largeq}}\label{sec:largeq}

For $\cS\subseteq\{1,\ldots,r\}$ put 
$$\cG_\cS=\left\{\sum_{j\in \cS} u_j\vartheta_j :~u_j\in \F_q^*,\ j\in \cS\right\}.$$
Since $\cG_r(s)$ is the disjoint union of
different sets $\cG_S$ with $\# \cS=s$ we immediately get
\begin{equation}\label{red1}
\left|S_{s,r}(\chi,\psi;f_1,f_2)\right|\le \binom{r}{s} \max_{\substack{\cS\subseteq\{1,\ldots,r\}\\ \# \cS=s}} \left|S_\cS(\chi,\psi;f_1,f_2)\right|,
\end{equation}
where
$$S_\cS(\chi,\psi;f_1,f_2)=\sum_{\nu \in \cG_{\cS}}\chi\(f_1(\nu)\)\psi\(f_2(\nu)\).$$
Let $\cV_\cS$ denote the linear space
$$\cV_\cS=\left\{\sum_{j\in \cS}u_j\vartheta_j:~u_j\in \F_q, \  j\in \cS\right\}$$
of dimension $s$ and verify 
$$\cG_{\cS}=\cV_{\cS}\setminus \bigcup_{j\in \cS}\cV_{\cS\setminus\{j\}}.$$
By the inclusion-exclusion principle we get
\begin{equation}\label{red2}
\begin{split}
S_\cS(\chi,\psi;f_1,f_2)&=\sum_{\nu\in \cV_\cS} \chi\(f_1(\nu)\)\psi\(f_2(\nu)\)\\
&\qquad -\sum_{\emptyset\not= \cJ\subseteq \cS}(-1)^{|J|+1}\sum_{\nu \in \cV_{\cS\setminus\cJ}}\chi\(f_1(\nu)\)\psi\(f_2(\nu)\).
\end{split}
\end{equation}
Combining~\eqref{red1} and~\eqref{red2}, we obtain
$$
 \left|S_{s,r}(\chi,\psi;f_1,f_2)\right|    \le \binom{r}{s}\left(1+\sum_{k=1}^s \binom{s}{k}\right)\max_\cV \left|\sum_{\nu\in V}\chi\(f_1(\nu)\)\psi\(f_2(\nu)\)\right|,
$$
where the maximum is taken over the $\F_q$-linear subspaces $\cV$ of $\F_{q^r}$.

Since the quadruple $(\chi,\psi;f_1,f_2)$  satisfies the condition of Lemma~\ref{lem:weilsub} we now easily derive the first result.


\subsection{Proof of Theorem~\ref{thm:general}}\label{sec:general}

Note that the required conditions of Theorem~\ref{thm:general} on $f_1(X)$ and $f_2(X)$ are invariant under the shift 
of the argument. For example, 
 $f_2(X)\in \cF_d(2)$ whenever $f_2(X+\vartheta_1+\ldots+\vartheta_r)\in \cF_d(2)$. 
Since
$$
S_{r-s,r}\(\chi,\psi;f_1,f_2\)
 =\sum_{\nu\in G_r(s)}\chi\(f_1\(\nu+\vartheta_1+\ldots+\vartheta_r\)\)\psi\(f_2 \(\nu+\vartheta_1+\ldots+\vartheta_r\)\),
$$
we may restrict ourselves to the case 
$$s\le r/2.$$

For given $\rho$ with $0<\rho \le 1/2$ we fix two real positive  parameters $\kappa$ and $\lambda$ 
and  set
$$
k = \rf{\kappa r}  \mand \ell = \fl{\lambda r} .
$$ 

We now decompose 
$$
S_{s,r}(\chi,\psi;f_1,f_2)   = \sum_{t= 0}^s T_t
$$
into the sums
$$
T_t = \sum_{\nu\in \cV_t} \sum_{\omega \in \cW_t} \chi\(f_1\(\nu+\omega\)\)\psi\(f_2 \(\nu+\omega\)\), 
$$
where 
\begin{align*}
& \cV_t =  \{\nu= u_1\vartheta_1+ \ldots +u_k \vartheta_k:~u_1, \ldots, u_k \in \F_2, \ \wt(\nu) = t\},\\
& \cW_t =  \{\omega= u_{k+1}\vartheta_{k+1}+ \ldots +u_r \vartheta_r:~u_{k+1}, \ldots, u_r \in \F_2, \ \wt(\omega) = s-t\}.
\end{align*}

In particular, we   have
\begin{equation}
\label{eq: Card Vt Wt} 
\#  \cV_t=   \binom{k}{t} \mand  \# \cW_t  =  \binom{r-k}{s-t},\quad t=0,1,\ldots,s.
\end{equation}

Clearly by Lemma~\ref{lem:BinCoeffs} we have 
\begin{equation}
\label{eq: AnyBinCoeff} 
\binom{n}{k}\leq 2^{nH^*(\gamma)}, 
\end{equation}
with  $\gamma =k/n$ for any integers 
$k\ge 0$ and $n\ge 1$.

First, we record the trivial bound 
$$|T_t| \le \#\cV_t \# \cW_t$$  
which we apply for $t <\ell$.

Hence, using~\eqref{eq: Card Vt Wt} and~\eqref{eq: AnyBinCoeff},  we conclude that the contribution 
to~$S_{s,r}(f)$ 
from such values of $t$ is bounded by 
\begin{equation}
\label{eq: Small t} 
 \sum_{t=0}^{\ell-1}|T_t| \le \sum_{t=0}^{\ell-1} \#\cV_t \# \cW_t \le \ell 2^{\mathsf f(\rho; \kappa, \lambda)r}
 =  2^{\mathsf f(\rho; \kappa, \lambda)r + o(r)},
\end{equation}
where
$$\mathsf f(\rho; \kappa, \lambda) 
= \max_{0 < \tau \le \lambda}  \left\{\kappa H^*\(\frac{\tau}{\kappa}\) +  (1-\kappa) H^*\(\frac{\rho -\tau}{1-\kappa}\)\right\}.$$
By Lemma~\ref{max} the maximum is attained for $\tau=\min\{\lambda,\kappa\rho\}$.
In particular, we may restrict ourselves to the case 
$$\lambda\le \kappa \rho.$$

For $t \ge \ell$ we proceed differently.  By the Cauchy-Schwarz inequality we get
$$
|T_t|^2 \le \#\cV_t \sum_{\nu\in \cV_t}\left| \sum_{\omega \in \cW_t}  \chi\(f_1\(\nu+\omega\)\)\psi\(f_2 \(\nu+\omega\)\)\right|^2.
$$

Extending the summation over $\nu$ to the linear subspace $\cL_k$ defined by~\eqref{Lk} and squaring out we derive
\begin{equation}
\label{eq:Cauchy} 
\begin{split}
|T_t|^2 \le \#\cV_t \sum_{\omega_1,\omega_2 \in \cW_t}  \sum_{\nu \in \cL_k}
& \chi\(f_1\(\nu+\omega_1\)/f_1\(\nu+\omega_2\)\) \\
 & \qquad  \psi\(f_2\(\nu+\omega_1\)-f_2\(\nu+\omega_2\)\).
\end{split}
\end{equation}

The rational function $f_1(X+\omega)/f_1(X)$ can be an $e$th power for some~$\omega\not=0$ and for these 'bad' $\omega$ we have to apply the trivial bound for all pairs $(\omega_1,\omega_2)$ with $\omega_2=\omega+\omega_1$. 
Since $f_1(X)$ has a simple root~$\xi$ this is only possible if $f_1(\xi+\omega)=f_1(\xi)=0$. However, this is only possible for at most $d_1=O(1)$ different values of $\omega \in \ovFq$. 
(Note that we use the convention that a rational function $f(X)=\frac{a(X)}{b(X)}$ is represented by polynomials $a(X)$ and~$b(X)$ with $\gcd(a(X),b(X))=1$.)

Recall that by our assumption $f_2(X) \in\cF_{d_2}(2)$,  for all $\omega_1\ne \omega_2$ the corresponding polynomial
$f_2(X+\omega_1)-f_2(X+\omega_2)$, 
is of the form allowed in Lemma~\ref{lem:weilsub}. 

To estimate the inner sum over $\nu$ in~\eqref{eq:Cauchy},  we now use the trivial bound~$2^k$ for $O\( \# \cW_t\)$ exceptional pairs 
$\(\omega_1, \omega_2\)$ as in the above (for example for $\omega_1=\omega_2$)
and use  Lemma~\ref{lem:weilsub} for the remaining $O\( \(\# \cW_t\)^2\)$  pairs 
$\(\omega_1, \omega_2\)$. Hence we obtain
$$
|T_t|^2 \ll \#\cV_t \( \(\# \cW_t\)^2 2^{r/2} +  \# \cW_t 2^{k} \).
$$
(We recall that the implied constants may depend on the degrees $d_1$ and~$d_2$).

Finally, using $\#\cV_t \# \cW_t \leq \#\Grs $, we obtain
$$
|T_t|^2  \ll \#\Grs  \#\cW_t 2^{r/2}  + \# \Grs 2^k.
$$
Since by~\eqref{eq: Card Vt Wt} and~\eqref{eq: AnyBinCoeff} 
$$
\# \cW_t  \le 2^{(1-\kappa)H^*\((\rho-\lambda)/(1-\kappa)\) r+o(r)}, 
$$
we now derive 
$$
|T_t| \ll \(2^{ \mathsf g(\rho; \lambda,\kappa)r}+2^{ \mathsf h(\rho; \lambda,\kappa)r}\)2^{o(r)}.
$$
Thus the total  contribution to $S_{s,r}(f)$ 
from such values of $t\ge \ell$ is bounded by 
\begin{equation}
\label{eq: Large t} 
 \sum_{t=\ell }^{s}|T_t| 
 \le s \(2^{\mathsf g(\rho; \lambda,\kappa)r}+2^{\mathsf h(\rho; \lambda,\kappa)r}\)2^{o(r)}\le \(2^{\mathsf g(\rho; \lambda,\kappa)r}+2^{\mathsf h(\rho; \lambda,\kappa)r}\) 2^{o(r)}.
\end{equation}
Combining~\eqref{eq: Small t}  and~\eqref{eq: Large t} we conclude the result.

\section{Characterisation of rational functions in $\cF_d(q)$} 

\subsection{Proof of Theorem~\ref{thm:mon}}\label{sec:mon}

We now show that  if  $\gcd(d,p)=1$ then 
the monomials $f(X)=X^d$ 
satisfy $f(X) \not\in \cF_d(q)$  if and only if 
\begin{equation}\label{eq:d} d \in \{1,2\}\cup \{p^k+1 :~k=1,2,\ldots\}.
\end{equation}
We may assume $\alpha\not=0$. 

For $d\in \{1,2\}$ the polynomial $(X+1)^d-X^d$ is trivially of the form $\beta X+1$.
So it remains to note that we can write  $1 = \mu^p - \mu$ for some~$\mu \in \ovFq$

So we now assume that $d \ge 3$.

 For $d=p^k+1$ with $k\ge 1$
we have
$$(X+1)^{p^k+1}-X^{p^k+1}=\sum_{j=0}^{p^k}\binom{p^k+1}{j}X^j.$$
By  Lemma~\ref{lem:lucas} we have 
$$\binom{p^k+1}{j}\equiv 0\bmod p \quad \mbox{for }j=2,3,\ldots,p^k-1$$
and 
$$\binom{p^k+1}{j}\equiv 1\bmod p \quad \mbox{for }j\in\{0,1,p^k\}.$$
Hence,
$$
(X+1)^{p^k+1}-X^{p^k+1}=X^{p^k}+X+1 =  g(X)^p - g(X) + 2X,
$$
where 
$$
g(X) = X^{p^{k-1}}+X^{p^{k-2}}+\ldots+X +\mu
$$
and as before $\mu$ is such that $1 = \mu^p-\mu$.

Next,  we check the remaining values of $d$ which are not of the form~\eqref{eq:d}. 
For $\omega \ne 0 $ the polynomial
$$f_{\omega}(X)=(X+\omega)^d- X^d=\omega d X^{d-1}+\ldots+ \omega^d
$$
has the leading coefficient $\omega d$ and hence 
is of degree $d-1\ge 2$ since $\gcd(d,p)=1$. 
Assume that 
$$f_{\omega}(X)=\alpha(g(X)^p-g(X))+\beta X.$$
Then we have   $d\equiv 1\bmod p$ and so the $p$-adic expansion of $d$ is
of the form
$$
d=1+d_1p+\ldots+d_kp^k,\quad \ 0\le d_1,\ldots,d_k<p, \quad k\ge 1, \quad d_k\ne0.
$$
By our assumption,  
we have either
\begin{equation}\label{dk1} d_k\ge 2, \quad \mbox{(that is, $p>2$)},
\end{equation}
and thus
\begin{equation}\label{dk2} d\ge 2p^k+1,
\end{equation}
or 
\begin{equation}\label{dk3}k\ge 2, \quad d_k=1,\quad \mbox{and}\quad d_i\ne 0 \quad
\mbox{ for some $i$ with $1\le i<k$},
\end{equation} 
and thus
\begin{equation}\label{eq: large d}
d\ge p^k + p^i + 1.
\end{equation}

Computing the derivative $f_{\omega}'(X)$ we  see that 
$$
f_{\omega}'(X) = -\alpha g'(X)+\beta
$$
and also 
\begin{align*}
f_{\omega}'(X)&=d((X+\omega)^{d-1}-X^{d-1})\\
&=d \sum_{j=1}^{d-1}\binom{d-1}{j}\omega^j X^{d-1-j}\\
&=d \sum_{j=2}^{d-1}\binom{d-1}{j}\omega^j X^{d-1-j},
\end{align*}
since for $j=1$ we have
$$
\binom{d-1}{1} = d-1\equiv 0\bmod p.
$$

Thus either $f_{\omega}'(X)=\beta$ or 
$$
\deg f_{\omega}' = \deg g'< p^{-1} \deg f_{\omega}=\frac{d-1}{p},
$$ 
that is, in either case,
\begin{equation}\label{eq:zero}
\binom{d-1}{j}\omega^j =0,\qquad  j=2,\ldots,d-1-(d-1)/p.
\end{equation}
In particular, 
in the case~\eqref{dk1} the inequality~\eqref{dk2} implies
$$d-1-\frac{d-1}{p}=(d-1)\(1-\frac{1}{p}\)\ge 2p^k\(1-\frac{1}{p}\)\ge p^k,$$
and in the case~\eqref{dk3}
the inequality~\eqref{eq: large d} implies 
$$
d-1-(d-1)/p = (d-1)(1-1/p)\ge (p^k+p)\left(1-\frac{1}{p}\right)\ge p^i.
$$
Now, applying~\eqref{eq:zero} with $j=p^k$ in the first case and $j = p^i$ in the second case, and invoking Lemma~\ref{lem:lucas} again, 
we derive
$$\binom{d-1}{p^k}\equiv \binom{d_k}{1}\not\equiv 0\bmod p$$
in the firs case, and
$$\binom{d-1}{p^i}\equiv\binom{d_i}{1}\equiv d_i \not\equiv 0\bmod p,$$
in the second case.
This implies $\omega^{p^k}=0$ and
$\omega^{p^i}=0$, respectively, and thus $\omega=0$ in both cases, 
which contradicts our assumption~$\omega\ne 0$. 

\subsection{Proof of Theorem~\ref{thm:inv}}\label{sec:inv}

First, for $\alpha=0$ and any $\omega,\beta\in \F_{q^r}$ with $\omega \ne 0$ we have 
to verify that
$$
(X+\omega)^{-d}-X^{-d}=\frac{X^d-(X+\omega)^d}{X^d(X+\omega)^d}\ne \beta X.
$$
This is trivial since $0$ is a pole of the left hand side but not of the right hand side.

It remains
to verify that for any $\beta\in \F_{q^r}$ and $\alpha,\omega\in \F_{q^r}^*$
$$\alpha^{-1}((X+\omega)^{-d}-X^{-d}-\beta X)=\frac{u(X)}{v(X)}$$
is not of the form $g(X)^p-g(X)$ with a rational function $g(X)\in \overline{\F_q}(X)$, where 
$$u(X)=-\alpha^{-1}\(\beta X^{d+1}(X+\omega)^d-X^d+(X+\omega)^d\)$$
and 
$$v(X)= X^d(X+\omega)^d.$$
Note that $u(0)u(-\omega)\not=0$ and thus $\gcd(u(X),v(X))=1$.
Write 
$$g(X)=\frac{a(X)}{b(X)}$$ 
with polynomials $a(X),b(X)\in \overline{\F_q}[X]$ such that  $\gcd(a(X),b(X))=1$ and $b(X)$ is monic. 
Cleaning the denominators, we get
$$b(X)^pu(X)=a(X)(a(X)^{p-1}-b(X)^{p-1})v(X).$$
Since $\gcd(b(X),a(X)^{p-1}-b(X)^{p-1})=1$ we get by the unique factorization theorem
$$v(X)=b(X)^p$$
and thus $d\equiv 0\bmod p$.

\subsection{Proof of Theorem~\ref{thm:pol}}\label{sec:pol}

We have to verify that under our conditions on $f(X)$ for any $\omega \ne 0$, 
the polynomial $f_\omega(X)$
is not of the form  $\alpha(g(X)^p-g(X))+\beta X$.

Suppose the contrary
$$f(X+\omega)-f(X)=\alpha(g(X)^p-g(X))+\beta X$$
and write
$$f(X)=\sum_{j=0}^d\gamma_jX^j\in \F_q[X],\quad \gamma_d\not=0.$$
Put
$$f_{\omega}(X)=f(X+\omega)-f(X).$$
We have either
\begin{equation}\label{eq: deg 1}
f_{\omega}(X)=\beta X
\end{equation}
or
\begin{equation}\label{eq: p|deg}
\deg f_{\omega} \equiv 0\bmod p.
\end{equation}
Hence, the coefficients $R_\ell$ of $f_{\omega}(X)$ at $X^\ell$ vanish for $\ell=\ell_0,\ldots,d-1$ where $\ell_0=2$ in the case of~\eqref{eq: deg 1}
 and $\ell_0=d-d_0+1$
in the case of~\eqref{eq: p|deg}. 

We have
$$R_\ell=\sum_{j=\ell+1}^d\gamma_j \binom{j}{\ell}\omega^{j-\ell} =0,\quad \ell=\ell_0,\ldots,d-1.$$
Note that by Lemma~\ref{lem:lucas} 
$$ 
\binom{d}{k}\equiv  \binom{d_0}{k}\not\equiv 0\bmod p,\qquad k=0,\ldots,d_0.
$$  
Define $T_\ell$, $\ell=1,\ldots,d_0-1$, recursively by setting $T_1=R_{d-1}$ and then for $\ell=2,\ldots,\min\{d-\ell_0,d_0+1\}$ by 
$$T_\ell=R_{d-\ell}-\gamma_d^{-1}\sum_{k=1}^{\ell-1}\gamma_{d-\ell+k}
 \binom{k+d-\ell}{ d-\ell}  \binom{d}{k}^{-1}T_k.$$
By induction we can show that
$$T_\ell=\gamma_d  \binom{d}{ \ell}\omega^\ell =0,\quad \ell=1,\ldots,\min\{d-\ell_0,d_0+1\}.$$
Since 
$$\gamma_d\ne 0 \mand  \binom{d}{ \ell}\not\equiv 0\bmod p, \quad \ell=1,\ldots,d_0,
$$ 
we obtain
$$\omega^\ell=0,\quad \ell=1,\ldots,\min\{d-\ell_0,d_0\}.$$
If $\min\{d-\ell_0,d_0\}\ge 1$, this implies $\omega= 0$, 
which contradicts our assumption  $\omega\ne 0$.
Considering both cases, $\ell_0=2$ and $\ell_0=d-d_0+1$, we get the desired contradiction if $d\ge 3$ and $d_0\ge 2$.

\subsection{Proof of Theorem~\ref{thm:ratsuff}}\label{sec:suff}
Assume that 
$$f(X+\omega)-f(X)-\beta X=\alpha\(\frac{a(X)^p}{b(X)^p}-\frac{a(X)}{b(X)}\)$$
for some $\alpha,\beta,\omega \in \overline{\F_q}$, $\omega\ne 0$, and relatively prime polynomials $a(X),b(X)\in \overline{\F_q}[X]$ with  $b(X)$ monic.
Then we get after simple calculations
\begin{equation}\label{uv}
\begin{split} 
\(u(X+\omega)v(X)-u(X)v(X+\omega)-\beta X v(X+\omega)v(X)\)&b(X)^p\\ 
=\alpha\(a(X)^p-a(X)b(X)^{p-1}\)&v(X)v(X+\omega).
\end{split} 
\end{equation}
Since $\gcd\(b(X),a(X)^p-a(X)b(X)^{p-1}\)=1$ and by our nondivisibility assumption of 
$ v(X)v(X+\omega)$  by  a nontrivial $p$th power, we obtain $b(X)=1$. 

For $\alpha=\beta=0$, since $\gcd(u(X),v(X))=1$, we get $v(X)=v(X+\omega)$. Comparing the coefficients of $X^{\deg{v}-1}$ we get $\deg v\equiv 0\bmod p$, which is excluded.

For $\alpha=0$ and $\beta\ne 0$, since $\deg u\le \deg v+1$, the left hand side 
of~\eqref{uv} is nonzero, which contradicts the right hand side.

For $\alpha\beta\ne 0$, the degree of the left hand side of~\eqref{uv} is $2\deg v+1$ but the 
degree of the right hand side is $\equiv 2\deg v\bmod p$, which is again not possible.

\section*{Acknowledgments} 

The authors wish to thank Alina Ostafe for useful discussions.

L.~M.\ was supported by the Austrian Science Fund FWF under the Grants 
P~31762 and F~5506, which is part of the Special Research Program 
"Quasi-Monte Carlo Methods: Theory and Applications". I.~E.~S. was supported by the Austrian Research Council ARC Grant DP~200100355.

 \end{document}